\documentclass{gtmon_a}
\pdfoutput=1
\usepackage{pinlabel}


\proceedingstitle{Heegaard splittings of 3--manifolds (Haifa 2005)}
\conferencestart{10 July 2005}
\conferenceend{19 July 2005}
\conferencename{Heegaard splittings of 3--manifolds}
\conferencelocation{Haifa}

\editor{Cameron Gordon}
\givenname{Cameron}
\surname{Gordon}

\editor{Yoav}
\givenname{Yoav}
\surname{Moriah}

\title{Thin position for knots and 3--manifolds: a unified approach}

\author{Hugh Howards}
\givenname{Hugh}
\surname{Howards}
\address{Department of Mathematics\\
Wake Forest University\\\newline
PO Box 7388\\
127 Manchester Hall\\
Winston-Salem NC 27109\\
USA}
\email{howards@wfu.edu}
\urladdr{}

\author{Yo'av Rieck}
\givenname{Yo'av}
\surname{Rieck}
\address{Department of Mathematics\\
University of Arkansas\\\newline
Fayetteville AR 72701\\
USA}
\email{yoav@uark.edu}
\urladdr{}

\author{Jennifer Schultens}
\givenname{Jennifer}
\surname{Schultens}
\address{Department of Mathematics\\
University of California, Davis\\\newline
1 Shields Avenue\\
Davis CA 95616\\
USA}
\email{jcs@math.ucdavis.edu}
\urladdr{}


\volumenumber{12}
\issuenumber{}
\publicationyear{2007}
\papernumber{04}
\startpage{89}
\endpage{120}

\doi{}
\MR{}
\Zbl{}

\arxivreference{} 

\keyword{thin position}
\keyword{knot}
\keyword{3-manifold}
\subject{primary}{msc2000}{57M27}
\subject{primary}{msc2000}{57M25}
\subject{secondary}{msc2000}{57N10}

\received{11 March 2006}
\revised{20 January 2007}
\accepted{20 January 2007}
\published{3 December 2007}
\publishedonline{3 December 2007}
\proposed{}
\seconded{}
\corresponding{}
\version{}


\makeatletter
\def\cnewtheorem#1[#2]#3{\newtheorem{#1}{#3}[section]
\expandafter\let\csname c@#1\endcsname\c@thm}


\let\xysavmatrix\xymatrix
\def\xymatrix{\disablesubscriptcorrection\xysavmatrix}
\AtBeginDocument{\let\tilde\wtilde}

\makeop{height}


\newtheorem{thm}{Theorem}
\cnewtheorem{lem}[thm]{Lemma}
\cnewtheorem{prop}[thm]{Proposition}
\cnewtheorem{cor}[thm]{Corollary}

\theoremstyle{remark}
\cnewtheorem{defn}[thm]{Definition}
\cnewtheorem{rem}[thm]{Remark}
\cnewtheorem{meta}[thm]{Meta-theorem}

\makeatother  

\newcommand{\mdc}{manifold decomposition}
\newcommand{\dbc}{2--fold branched cover}

\hyphenation{cor-res-pon-ding}


\begin{document}

\begin{abstract}
We unify the notions of thin position for knots and for 3--manifolds
and survey recent work concerning these notions.
\end{abstract}

\maketitle

\section{Introduction}

Thin position for knots and for 3--manifolds have become basic tools
for 3--manifold topologists and knot theorists.  When David Gabai first
introduced the notion of thin position for knots as an ad hoc tool in
studying foliations of 3--manifolds he may not have foreseen the
widespread interest this notion would engender.  Thin position for
knots featured prominently in the work of Mark Culler, Cameron McA
Gordon, John Luecke and Peter Shalen concerning Dehn surgery on knots
as well in the proof by Cameron McA Gordon and John Luecke that knots
are determined by their complements.  It also played a crucial role in
Abigail Thompson's proof that there is an algorithm to recognize
${\mathbb S}^3$; Rubinstein's original argument \cite{rubinstein} used the
related concept of {\it minimax sweepouts} and {\it normal surfaces}.

A knot in thin position appears to be ideally situated from many
points of view.  This is demonstrated, for instance, by the work of
Daniel J. Heath and Tsuyoshi Kobayashi.  There is also a growing
expectation that some knot invariants can be calculated most
efficiently by employing thin position.

Later, Martin Scharlemann and Abigail Thompson introduced a related,
but not completely analogous, notion of thin position for 3--manifolds.
At first glance, their theory appeared elegant but of little use.  It
took a number of years for the strength of their theory to come to
fruition.  This theory has now become one of the fundamental tools in
the study of 3--manifolds.  Moreover, it has proved more natural than
the notion of thin position for knots.  This has prompted Martin
Scharlemann and Abigail Thompson to begin reworking the notion of thin
position for knots under the guise of ``slender knots''.  Their work
is beyond the scope of this article.

The aim of this article is to introduce the novice to the notion of
thin position for knots and 3--manifolds.  The emphasis here is to
underline the formal analogy of the definitions.  Each of these
notions is defined more naturally elsewhere.  For the most natural
definition of thin position for knots, see Gabai \cite{G}.  And for a more
extensive treatment of thin position for knots, see Scharlemann \cite{Scharl}.
For the most natural definition of thin position for 3--manifolds, see
Scharlemann and Thompson
\cite{ST}.  The added formality here is designed to unify the two
definitions.  This should allow an easy adaptation of the underlying
framework to numerous other settings.  In this paper we avoid some of the more
technical details; for an extensive introduction to the subject see
Saito, Scharlemann and Schultens \cite{saito}. 

We wish to thank Dave Bayer for suggesting this project, Marty Scharlemann
for helpful discussions, and the referee for many helpful suggestions.

\section{Thin position}

To define thin position in a general setting, we need the following: A pair of
manifolds $(N, M)$ with $N \subset M$.  A constraint $C$ that may be placed on
the set, ${\cal 
M}$, of Morse functions on $(N, M)$.  A function $g\co {\cal L} \rightarrow
{\mathbb R}^{\infty}$, for ${\cal L}$ the set of ordered pairs of level sets
of the elements of ${\cal M}$.  A well ordered  set ${\cal O}$.  And finally,
a function $f\co  {\mathbb R}^\infty \rightarrow {\cal O}$.  
We note that $g$ maps into $\mathbb{R}^n$ (for some $n$ that
depends on the manifold and the knot); we identify $\mathbb{R}^n$ with
$\mathbb{R}^\infty$ with all but the first $n$ coordinates set to zero.
Intuitively, $g$ measures the complexity of individual levels and $f$
measures the complexity of $(N,M)$. 

\begin{rem}
In fact, the definition can be made a little more general, as $N$ does not
need to be a manifold.  As an example, below we discuss a few settings were
$N$ is a graph.  
\end{rem}

Let $(N, M)$, $C$, $g$, ${\cal O}$ and $f$ be as required.
Set

\[{\cal C} = \{ h \in {\cal M} \; | \; h \; \mbox{satisfies} \; C\; \}.  \]

Given $h \in {\cal C}$, denote the critical values of $h$, in
increasing order, by $c_0, \dots, c_n$.  Note that since $h$ is a
Morse function on pairs, a critical value of $h$ is a critical value
either of $h|_N$ or of $h|_M$.  For $i = 1, \dots, n$, choose a
regular value $r_i$ such that $c_{i-1} < r_i < c_i$.  Consider the
finite sequence
\[(h|_N^{-1}(r_1), h|_M^{-1}(r_1))\dots, (h|_N^{-1}(r_n), h|_M^{-1}(r_n))\]
of ordered pairs of level sets of $h$ and the corresponding ordered
2n-tuple
\[(g(h|_N^{-1}(r_1), h|_M^{-1}(r_1)), \dots, g(h|_N^{-1}(r_n), h|_M^{-1}(r_n)))
\in {\mathbb R}^n \in
{\mathbb R}^{\infty}.\] Set
\[w_h(N) = f(g(h|_N^{-1}(r_1), h|_M^{-1}(r_1)), \dots, g(h|_N^{-1}(r_n),
h|_M^{-1}(r_n))).\] We call
$w_h(N)$ the width of $N$ relative to $h$.  Set \[w(N) = \min\{ \; w_h(N) \; | \;
h \; \in \; {\cal C}\}. \; \] We call $w(N)$ the width of $(N, C, g, {\cal O}, f)$.
We say that $(N, C, g, {\cal O}, f)$ is in thin position if it is
presented together with $h \in {\cal C}$ such that $w(N) = w_h(N)$.

If $r_i$ is such that \[g(h|_N^{-1}(r_{i-1}), h|_M^{-1}(r_{i-1})) <
g(h|_N^{-1}(r_{i}), h|_M^{-1}(r_{i})) > g(h|_N^{-1}(r_{i+1}),
h|_M^{-1}(r_{i+1}))\] where $<$ and $>$ are in the dictionary order, then we call
$(h|_N^{-1}(r_i),h|_M^{-1}(r_i))$ a thick level.  If $r_i$ is such that
\[g(h|_N^{-1}(r_{i-1}), h|_M^{-1}(r_{i-1})) > g(h|_N^{-1}(r_{i}),
h|_M^{-1}(r_{i})) < g(h|_N^{-1}(r_{i+1}), h|_M^{-1}(r_{i+1}))\] in
the dictionary order, then we call $(h|_N^{-1}(r_i),h|_M^{-1}(r_i))$ a
thin level.

\subsection{Thin position for knots}

The notion of thin position for knots was introduced by D. Gabai.  He
designed and used this notion successfully to prove Property R for
knots.  We here specify $(N, M)$, $C$, $g$, ${\cal O}$ and $f$ as used
in the context of thin position for knots.  Let \[(N, M) = (K,
{\mathbb S^3})\] be a knot type.  Take $C$ to be the requirement that
the Morse function $h\co  (K, {\mathbb S^3}) \rightarrow {\mathbb R}$
has exactly two critical points on ${\mathbb S}^3$ (a maximum, $\infty$, and a
minimum, $-\infty$); we call such a function a (standard) height function of 
$\mathbb{S}^3$.  In considering thin position for knots, we may 
visualize our Morse function as projection onto the vertical
coordinate.  The fact that we may do so derives from the constraint
placed on the Morse functions under consideration.

Let $g$ be the function that takes the ordered pair
\[(h|_{K}^{-1}(r_i), h^{-1}(r_i))\] of level sets of a Morse function
$h$ to \[\chi(h|_{K}^{-1}(r_i))\] And let ${\cal O}$ be ${\mathbb N}$
and $f\co  {\mathbb R}^{\infty} \rightarrow {\mathbb N}$ the function
defined by \[f(x_1, \dots, x_n) = \sum_i x_i\]

Thus in this case, we proceed as follows: Given a Morse function $h\co 
(K, {\mathbb S^3}) \rightarrow {\mathbb R}$ of pairs such that $h|_M$
is a height function, let $c_0, \dots, c_n$ be the critical points of
$h$.  Note that since these critical points are critical points of
either $h|_K$ or of $h|_{{\mathbb S}^3}$, exactly two of these
critical points will be critical points of $h|_{{\mathbb S}^3}$.  Note
further that one of these critical points lies below all critical
points of $h|_K$ and the other lies above all critical points of
$h|_K$.

Now, for $i = 1, \dots, n$, choose regular values $r_i$ such that
$c_{i-1} < r_i < c_i$.  Consider pairs of level surfaces
\[(h|_K^{-1}(r_i), h|_{{\mathbb S}^3}^{-1}(r_i))\]
and
\[g(h|_K^{-1}(r_i), h|_{{\mathbb S}^3}^{-1}(r_i)) = \chi(h|_K^{-1}(r_i)) =
\#|K \cap (h|_{{\mathbb S}^3})^{-1}(r_i)|\]
Note that here \[h|_K^{-1}(r_1)) = h|_K^{-1}(r_n)) = \emptyset\] and
thus \[g(h|_K^{-1}(r_1), h|_{{\mathbb S}^3}^{-1}(r_1)) =
g(h|_K^{-1}(r_n), h|_{{\mathbb S}^3}^{-1}(r_n)) = 0.\]
This yields the ordered n-tuple
\[(0, \#|K \cap (h|_{{\mathbb S}^3})^{-1}(r_2)|, \dots,
\#|K \cap (h|_{{\mathbb S}^3})^{-1}(r_{n-1})|, 0)\]
And thus
\[w_h(K, {\mathbb S}^3) = 0 + \#|K \cap (h|_{{\mathbb S}^3})^{-1}(r_2)| +
\dots + \#|K \cap (h|_{{\mathbb S}^3})^{-1}(r_{n-1})| + 0\]

\begin{figure}[ht!]
\labellist\small
\pinlabel {2} [l] at 425 559
\pinlabel {4} [l] at 425 505
\pinlabel {6} [l] at 425 443
\pinlabel {4} [l] at 425 351
\pinlabel {6} [l] at 425 290
\pinlabel {8} [l] at 425 226
\pinlabel {6} [l] at 425 145
\pinlabel {4} [l] at 425 91
\pinlabel {2} [l] at 425 37
\pinlabel {braid} at 200 420
\pinlabel {braid} at 200 200
\endlabellist
\centerline{\includegraphics[width=.4\textwidth]{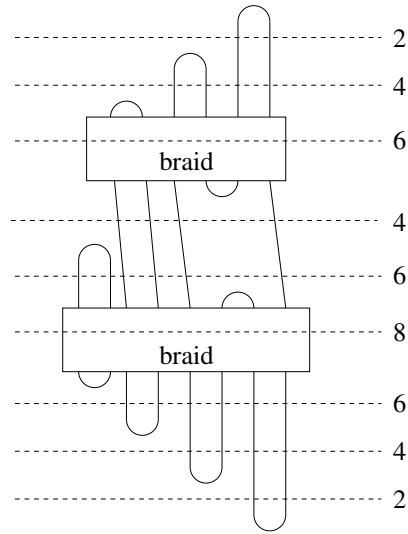}}
\caption{Thin position for knots}
\label{tpk}
\end{figure}

In \fullref{tpk}, the knot pictured schematically has
\[w_h(K, {\mathbb S}^3) = 0 + 2 + 4 + 6 + 4 + 6 + 8 + 6 + 4 + 2 + 0 = 42\]
The width of $(K, {\mathbb S}^3)$ is the smallest possible relative
width $w_h(K)$, as $h$ ranges over all height functions on ${\mathbb
S}^3$.  In the usual computation of width, one considers only critical
points of $h|_K$, one thus considers two fewer critical points and two
fewer regular points and is thus not compelled to add the $0$'s in the
sum.

\subsection{Thin position for 3--manifolds} \label{tp3m}

The notion of thin position for 3--manifolds was pioneered by
Scharlemann and Thompson.  We here specify $(N, M)$, $C$, $g$, ${\cal
O}$ and $f$ as used in the context of thin position for 3--manifolds.
Let $N = M$ and let $M$ be a closed 3--manifold.  Let $C$ be
the vacuous requirement (we consider all Morse functions).  Let $g$ be the
function that takes the ordered pair
\[(h^{-1}(r_i), \emptyset)\] of level sets of a Morse function
$h$ to \[\#|h^{-1}(r_i)| + s_i - \chi(h^{-1}(r_i)),\] where $s_i$ is the
number of $S^2$ components in $h^{-1}(r_i)$.  Let ${\cal O}$ be
${\mathbb N}^{\infty}$ in the dictionary order.  Finally, let $f\co 
{\mathbb R}^{\infty} \rightarrow {\mathbb Z}^{\infty}$ be the function
that takes the ordered $n$--tuple $(x_1, \dots, x_n)$, deletes all
entries $x_i$ for which either $x_{i-1} > x_i$ or $x_{i+1} > x_i$ and
then arranges the remaining entries (that is, the local maxima) in
nonincreasing order. 

Thus in this case, we proceed as follows: We identify $(M,M)$ with $M$.  
Let $h$ be a Morse function $$h\co  M \rightarrow {\mathbb R}.$$ 

Let $c_0, \dots, c_n$ be the critical points of $h$ and for $i = 1, 
\dots, n$, choose regular values $r_i$ such that $c_{i-1} < r_i <
c_i$.  Consider the level surfaces
\[h^{-1}(r_1), \cdots, h^{-1}(r_n)\]
and
\[g(h^{-1}(r_i)) = \#|h^{-1}(r_i)| + s_i - \chi(h^{-1}(r_i))\]
where $s_i$ is the number of spherical components of $h^{-1}(r_i)$.
This yields the ordered n-tuple
\[(\#|h^{-1}(r_1)| + s_1 - \chi(h^{-1}(r_1)), \cdots, \#|h^{-1}(r_n)| + s_n -
\chi(h^{-1}(r_n))).\]

The function $f$ picks out the values $\#|h^{-1}(r_1)| + s_1 -
\chi(h^{-1}(r_1))$ for the thick levels of $h$ and arranges them in non
increasing order. Thus
\[w_h(K, {\mathbb S}^3) = f((\#|h^{-1}(r_1)| + s_1 -
\chi(h^{-1}(r_1)), \cdots, \#|h^{-1}(r_n)| + s_n -
\chi(h^{-1}(r_n))))\]
and $w(N)$ is the smallest such sequence arising for a Morse function
$h$ on $M$, in the dictionary order.

\begin{figure}[ht!]
\labellist\small
\pinlabel {2--handles} at 125 128
\pinlabel {1--handles} at 68 100
\pinlabel {2--handles} at 68 48
\pinlabel {1--handles} at 125 20
\pinlabel {$S_2 =$ genus 2} [l] at 260 110
\pinlabel {$F_1 = 2 \times $ genus 1} [l] at 260 75
\pinlabel {$S_1 = $ genus 2} [l] at 260 40
\endlabellist
\centerline{\includegraphics[width=0.5\textwidth]{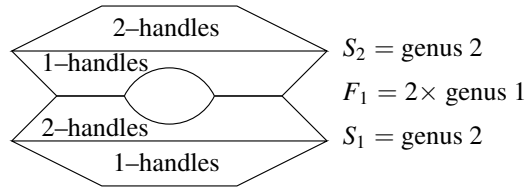}}
\caption{Thin position for 3--manifolds}
\label{tp3mf}
\end{figure}

The schematic in \fullref{tp3mf} describes a decomposition of the 3--torus
\[{\mathbb T}^3 = {\mathbb S}^1 \times {\mathbb S}^1 \times {\mathbb S}^1.\]
Note that a torus or a sphere will never appear as a thick level in a thin
presentation of $\mathbb{T}^3$, and a single genus 2 surface is insufficient.
So the width of $\mathbb{T}^3$ is:
\[w_h({\mathbb T}^3) = (3,3).\]

\subsection{Thin position for knots in 3--manifolds}

We here suggest a more general application of the notion of thin
position to knots in 3--manifolds.
This notion differs from the 
standard notion of thin position for knots in ${\mathbb S}^3$ in that
we do not restrict our attention to specific height functions.  In the setting
of 3--manifolds we wish to pick Morse functions optimal with respect to
both the 3--manifold and the knot.

\begin{rem}
The first application of thin position for knots in general manifolds was
given in 1997 in two independent PhD dissertations: Feist (unpublished) and 
Rieck \cite{rieck}.  However, their approach is different from ours and is
described below.   An similar approach to the one presented here can be found
in Hayashi and Shimokawa \cite{HSh}.
\end{rem}

We here specify the $(N, M)$, $C$, $g$, ${\cal O}$ and $f$ we have in
mind.  Let $M$ be a closed 3--manifold and let $N = K$ be a knot contained
in $M$.  Let $C$ be the vacuous requirement.  Let $g$ be the function
that takes the ordered pair
\[((h|_K)^{-1}(r_i), h^{-1}(r_i))\] of level sets of a Morse function
$h$ to \[2\#|h^{-1}(r_i)| - \chi(h^{-1}(r_i)) + 2\#|h|_K^{-1}(r_i)| -
\chi(h|_K^{-1}(r_i)).\] (Here the last two terms just count the
number of points $\#|h|_K^{-1}(r_i)|$.  The cumbersome notation aims
to emphasize the equal weight of the 3--manifold and the knot.)  And
let ${\cal O}$ be ${\mathbb N}^{\infty}$ in the dictionary order.
Finally, let $f\co  {\mathbb R}^{\infty} \rightarrow {\mathbb
N}^{\infty}$ be the function that takes the ordered $n$--tuple $(x_1,
\dots, x_n)$ and rearranges the entries so they are in nonincreasing
order.

Thus in this case, we proceed as follows: Given a Morse function \[h\co 
(K, M) \rightarrow {\mathbb R},\] let $c_0, \dots, c_n$ be the
critical points of $h$.  For $i = 1, \dots, n$, choose regular values
$r_i$ such that $c_{i-1} < r_i < c_i$.  Consider the pairs
\[(h|_K^{-1}(r_1), h^{-1}(r_1)), \cdots, (h|_K^{-1}(r_n), h^{-1}(r_n))\]
then
\[g((h|_K^{-1}(r_1), h^{-1}(r_i))) = 2\#|h^{-1}(r_i)| -
\chi(h^{-1}(r_i)) + 2\#|h|_K^{-1}(r_i)| - \chi(h|_K^{-1}(r_i)).\]

This yields the ordered n-tuple
\[(2\#|h^{-1}(r_1)| - \chi(h^{-1}(r_1)) + 2\#|h|_K^{-1}(r_1)| -
\chi(h|_K^{-1}(r_1)), \cdots, \]\[2\#|h^{-1}(r_n)| - \chi(h^{-1}(r_n)) +
2\#|h|_K^{-1}(r_n)| - \chi(h|_K^{-1}(r_n)))\]

The function $f$ rearranges the entries in non increasing order.  Thus
\begin{eqnarray*}
  w_h(K, M) &=& f((2\#|h^{-1}(r_1)| - \chi(h^{-1}(r_1)) +
                              2\#|h|_K^{-1}(r_1)| - \chi(h|_K^{-1}(r_1)),
                              \cdots,\\
                  & & \ \ \ \ 2\#|h^{-1}(r_n)| - \chi(h^{-1}(r_n)) +
                              2\#|h|_K^{-1}(r_n)| - \chi(h|_K^{-1}(r_n)))) 
\end{eqnarray*}
and $w(K,M)$ is the smallest such sequence arising for Morse
functions on $(K,M)$, in the dictionary order.

\begin{figure}[ht!]
\labellist\tiny
\pinlabel {0} [l] at 580 456
\pinlabel {2} [l] at 580 438
\pinlabel {4} [l] at 580 420
\pinlabel {6} [l] at 580 402
\pinlabel {8} [l] at 580 384
\pinlabel {10} [l] at 580 366
\pinlabel {8} [l] at 580 298
\pinlabel {6} [l] at 580 236
\pinlabel {8} [l] at 580 174
\pinlabel {10} [l] at 580 100
\pinlabel {8} [l] at 580 82
\pinlabel {6} [l] at 580 64
\pinlabel {4} [l] at 580 46
\pinlabel {2} [l] at 580 28
\pinlabel {0} [l] at 580 10
\small
\pinlabel {braid} at 290 344
\pinlabel {braid} at 290 128
\endlabellist
\centerline{\includegraphics[width=.6\textwidth]{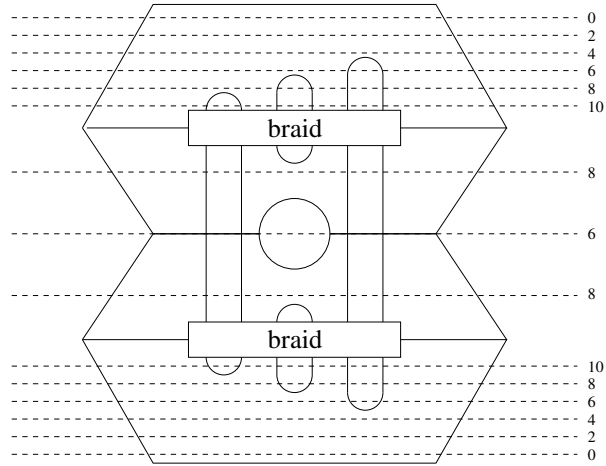}}
\caption{Thin position for knots in 3--manifolds}
\label{tpk3m}
\end{figure}

The schematic in \fullref{tpk3m} gives $g((h^{-1}(r_i),
h|_K^{-1}(r_i)))$ for a knot in ${\mathbb T}^3 = {\mathbb S}^1 \times
{\mathbb S}^1 \times {\mathbb S}^1$ with respect to a specific Morse
function.  Here

\[w_h(K, {\mathbb T}^3) = (10, 10, 8, 8, 8, 8, 6, 6, 6, 4, 4, 2, 2, 0, 0).\]

\subsection{Other settings}

There are other settings to which our general theory applies.  We will
not work them out in detail here.  One which deserves to be
mentioned is that of manifolds with boundary.  This setting has been
studied along with the case of closed 3--manifolds as in \fullref{tp3m}.  But in those studies, the functions considered are not in
fact Morse functions, but rather Morse functions relative boundary,
that is, functions that are Morse functions except that they are constant
on boundary components.

One can consider the setting in which this requirement is dropped.
Then $(N, M)$, $C$, $g$, ${\cal O}$ and $f$ are as follows:  $M$ is a
3--manifold and $N = M$ (as above we identify $(M,M)$ with $M$).  There are no
requirements on the 
Morse functions (except that they be Morse functions, in particular,
transverse to $\partial M$).  And $g$ is the function that takes the
ordered pair
\[(\emptyset,h^{-1}(r_i))\] of level sets of a Morse function
$h$ to \[\#|h^{-1}(r_i)| + s_i - \chi(h^{-1}(r_i)),\] 
where $s_i$ is the number of spheres in $h^{-1}(r_i)$ and ${\cal O}$ is
${\mathbb N}^{\infty}$ in the dictionary order.  Finally, $f\co  {\mathbb
R}^{\infty} \rightarrow {\mathbb N}^{\infty}$ is the function that
takes the ordered $n$--tuple $(x_1, \dots, x_n)$, deletes all entries
$x_i$ for which either $x_{i-1} > x_i$ or $x_{i+1} > x_i$ and then
arranges the remaining entries in nonincreasing order.  Much of the
theory of Scharlemann and Thompson should carry over to this setting.

As mentioned above, the definition of thin position for a knot $K$ in a
3--manifold $M$ given by Feist and Rieck \cite{rieck} is different than the
definition above.  It  
does not take into account critical points of the manifold.  We can retrieve
it by considering Morse functions with the following constraints: all the
critical points of $M$ of index zero or one are in $h^{-1}(-\infty,-1)$,  all
the critical points of $M$ of index two or three are in $h^{-1}(1,\infty)$,
and the knot in contained in $h^{-1}(-1,1)$.  The width is then calculated as
in $\mathbb{S}^3$ by summing the number of times $K$ intersects each level:
\[w_h(K, M) = 0 + \#|K \cap (h|_{M})^{-1}(r_2)| +
\dots + \#|K \cap (h|_{M})^{-1}(r_{n-1})| + 0\]
Another important setting is graphs embedded in 3--manifolds.  Although this
paper is about knots and 3--manifolds, we can generalize the definition of thin
position by allowing $N$ to be a graph.  A simple application of this was
given by Rieck and Sedgwick \cite{riecksedgwick} where the authors
considered a bouquet of circles (that is, a connected graph with a single
vertex).  The constraint imposed is equivalent to: all the
critical points of $M$ of index zero or one are in $h^{-1}(-\infty,-1)$,  all
the critical points of $M$ of index two or three are in $h^{-1}(1,\infty)$,
the vertex is at level $1$, and the interiors of all the edges are in
$h^{-1}(-1,1)$.  Again, the width was calculated as above.  A more
sophisticated approach was taken by Scharlemann and Thompson \cite{schthomp}
and Goda, Scharlemann and Thompson \cite{godaschthomp}, who considered
trivalent graphs (that is, graphs with vertices of valence 3 only) in
$\mathbb{S}^3$.  They used the standard height function on $\mathbb{S}^3$.
Roughly speaking, they treated a vertex as a critical point.  Generically,
every vertex has two edges above and one below (a $Y$ vertex) or two edges
below and one above (a $\lambda$ vertex).  The treatment of $Y$ vertices is
similar to that of minima and of $\lambda$ vertices to that of maxima.

\section{A counting argument (or why forgetfulness is practically
irrelevant)}

In this section we discuss a counting argument that relates two
different widths if these widths are computed identically except at
the final stage.  That is, if $(N, M)$, $C$ and $g$ are identical, but
${\cal O}$ and $f$ differ in a prescribed way.

As a warm up, consider the lemma below.  It is based on a comment by
Clint McCrory.  We say that a knot $K$ in ${\mathbb S}^3$ is in bridge
position with respect to the height function $h$, if all its maxima
occur above all its minima.  The bridge number of $K$ is the smallest
possible number of maxima as $h$ ranges over all height functions on
${\mathbb S}^3$ (see Schultens \cite{Sc3}).  In
\fullref{section:additivity} we give a more detailed discussion of
bridge position and its relationship to thin position.

\begin{lem}[Clint McCrory] \label{bridgecount}
Let $K$ be a knot in ${\mathbb S}^3$.  If thin
position is necessarily bridge position and the bridge number of $K$
is $n$, then $w(K) = 2n^2$.
\end{lem}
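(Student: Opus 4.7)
The plan is to directly compute the width in bridge position with $m$ bridges, obtaining $2m^2$, and then observe that bridge number is by definition the smallest such $m$, so if thin position must be bridge position then the infimum is realized at $m=n$.

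First I would set up the level counts. Suppose $K$ is in bridge position with exactly $m$ maxima (hence $m$ minima, since $K$ is a knot). By hypothesis all minima lie below all maxima, so the critical points of $h|_K$, read from bottom to top, are $m$ minima followed by $m$ maxima. Choosing a regular value between each consecutive pair of critical points of $h|_K$, the intersection number with $K$ ascends as each minimum is crossed and descends as each maximum is crossed. Thus the sequence of values $\#|K\cap h^{-1}(r_i)|$ is exactly
\[2,\ 4,\ \dots,\ 2(m-1),\ 2m,\ 2(m-1),\ \dots,\ 4,\ 2.\]

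Next I would sum this sequence. Writing the sum as $(2+4+\cdots+2m)+(2+4+\cdots+2(m-1))$ and using $1+2+\cdots+k=k(k+1)/2$ gives
\[w_h(K)\;=\;m(m+1)+m(m-1)\;=\;2m^2.\]
(Using the convention in the paper that drops the two trailing zeros from the sum contributes nothing, so the formula is unaffected.) Hence every bridge presentation of $K$ with $m$ maxima has width exactly $2m^2$.

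Now I would apply the two hypotheses. Because the bridge number is $n$, there exists a bridge presentation with $m=n$, giving $w(K)\le 2n^2$. Conversely, the assumption that thin position is necessarily bridge position means that any width–minimizing Morse function $h$ presents $K$ in bridge position with some number of maxima $m\ge n$ (the minimum over all bridge presentations being $n$); thus $w(K)=w_h(K)=2m^2\ge 2n^2$. Combining the two inequalities yields $w(K)=2n^2$.

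The only substantive step is the level-count bookkeeping in the first paragraph; once the ascending/descending pattern is recorded correctly, the rest is an arithmetic identity and an application of the two hypotheses. The mildest subtlety is confirming that the thin-position minimizer, being a bridge presentation, must also minimize the number of maxima among bridge presentations — which follows because $2m^2$ is strictly increasing in $m$, so any $m>n$ would contradict $w(K)\le 2n^2$.
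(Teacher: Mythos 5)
Your proposal is correct and takes essentially the same approach as the paper: compute the width of a bridge presentation with $m$ maxima as $2m^2$, then combine the bound $w(K)\le 2n^2$ (from bridge number $n$) with the hypothesis that thin position is bridge position to conclude $w(K)=2n^2$. The only cosmetic difference is that the paper establishes $\sum_i \#|K\cap h^{-1}(r_i)| = 2k^2$ via a pictorial ``count the dots'' rendition of the Gauss summation formula, whereas you write out the arithmetic $m(m+1)+m(m-1)=2m^2$ directly, and you are slightly more explicit than the paper in spelling out why the two inequalities pin down the value.
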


\begin{proof}
Suppose the knot is in thin position with respect to $h$ and is also
in bridge position.  Then the knot has $k$ maxima and $k$ minima, for
$k \geq n$.  If we denote the critical values in increasing order by
$c_0, \dots, c_{k-1}, c_k, \dots, c_{2k}$, then $c_0, \dots, c_{k-1}$ are
minima and $c_k, \dots, c_{2k}$ are maxima.  Thus \[h|_{K}^{-1}(r_1) = 2,
h|_{K}^{-1}(r_2) = 4, \cdots, h|_{K}^{-1}(r_{k}) = 2k\]
\[h|_{K}^{-1}(r_{k+1}) = 2k - 2, 
h|_{K}^{-1}(r_{k+2}) = 2k - 4, \cdots, h|_{K}^{-1}(r_{2k}) = 2.\] 
See \fullref{calc}.  There each dot corresponds to
$\frac{h|_{K}^{-1}(r_i)}{2}$ in the case where $k = 5$.

\begin{figure}[ht!]
\centerline{\includegraphics[width=.3\textwidth]{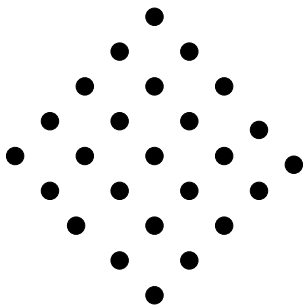}}
\caption{Calculating width}
\label{calc}
\end{figure}

Note how the total number of dots is $k^2$.  (This is merely a
geometric visualization of the Gauss summation formula.)  Thus $w_h(K)
= 2k^2$.  Now since bridge number is $n$, we see that $h$ can be
chosen so that $w_h(K) = 2n^2$.  Since thin position is necessarily
bridge position, it follows that $w(K) = 2n^2$.
\end{proof}

A slightly more general version of this lemma allows us to compute the
width of a knot from the thick and thin levels of a knot in thin
position.  This more general lemma was included in Scharlemann and
Schultens \cite{SS}.

\begin{lem} \label{widthcount}
Let $S_{i_1}, \dots, S_{i_k}$ be the thick levels of $K$ and $F_{j_1},
\dots, F_{j_{k-1}}$ the thin levels.  Set $a_{i_l} = \frac{|\;K\; \cap
S_{i_l}|}{2}$ and $b_{j_l} = \frac{|\;K\; \cap F_{i_l}|}{2}$.  Then \[w(K) = 2
\sum_{l = 1}^k a_{i_l}^2 - 2 \sum_{l = 1}^{k-1} b_{j_l}^2.\]
\end{lem}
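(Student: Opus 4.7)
The plan is to generalize the bookkeeping argument of \fullref{bridgecount}. The key geometric input is that, between the global minimum and maximum of $h|_{\mathbb{S}^3}$, every critical point of $h$ is a critical point of $h|_K$, and at each such point the count $\#|K\cap h^{-1}(r)|$ changes by exactly $\pm 2$: going upward, a minimum of $h|_K$ adds $2$ and a maximum removes $2$. Hence between a thin level and the subsequent thick level the half-counts strictly increase in unit steps $b, b+1, \dots, a$, and between a thick level and the following thin level they strictly decrease in unit steps.

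It is convenient to adjoin two ``virtual'' thin levels $F_{j_0}$ and $F_{j_k}$ of half-count $b_{j_0}=b_{j_k}=0$, corresponding to the regular values of $h$ that lie below (respectively above) every critical point of $h|_K$. The extremal levels then alternate as $F_{j_0}, S_{i_1}, F_{j_1}, \dots, S_{i_k}, F_{j_k}$, and every regular value of $h$ is covered by one of $k$ \emph{ascending runs} (the $l$-th going from $F_{j_{l-1}}$ up through $S_{i_l}$, with half-counts $b_{j_{l-1}}, b_{j_{l-1}}+1, \dots, a_{i_l}$) together with $k$ \emph{descending runs} (the $l$-th going from $S_{i_l}$ down through $F_{j_l}$, with half-counts $a_{i_l}, a_{i_l}-1, \dots, b_{j_l}$).

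Next I would sum the half-counts over all $2k$ runs, using $\sum_{q=x}^{y} q = \tfrac{1}{2}\bigl((y^2-x^2)+(x+y)\bigr)$ on each run. In this double sum each thick level $S_{i_l}$ is counted twice (once as the top of ascending run $l$ and once as the top of descending run $l$) and each interior thin level $F_{j_l}$ with $1\le l\le k-1$ is counted twice, whereas $F_{j_0}$ and $F_{j_k}$ contribute $0$. Therefore
\[
\sum_{\text{runs}} = \tfrac{1}{2}\,w_h(K) + \sum_{l=1}^{k} a_{i_l} + \sum_{l=1}^{k-1} b_{j_l}.
\]
Using $b_{j_0}=b_{j_k}=0$, the quadratic $b$-terms on the explicit left-hand side telescope to $2\sum_{l=1}^{k-1} b_{j_l}^2$ and the linear $b$-terms telescope to $2\sum_{l=1}^{k-1} b_{j_l}$; the linear $a$- and $b$-contributions on the two sides then cancel, leaving $w_h(K) = 2\sum_{l=1}^k a_{i_l}^2 - 2\sum_{l=1}^{k-1} b_{j_l}^2$. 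Since $K$ is in thin position, $w(K)=w_h(K)$.

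The only real obstacle is the bookkeeping: keeping careful track of the multiplicity with which each extremal level appears in the partition into ascending and descending runs, and performing the telescoping that eliminates the linear terms. The underlying geometric fact---that at each critical point of $h|_K$ the count changes by exactly $\pm 2$, so that thick and thin levels alternate as local maxima and minima of the count sequence---is immediate from the definitions.
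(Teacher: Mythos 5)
Your proof is correct and follows essentially the same approach as the paper, which invokes the Gauss summation formula on the thick levels and then compensates for the double-counted contribution of the thin levels (the paper presents this pictorially, with a single paragraph and a figure illustrating the cancellation principle). You have simply made the bookkeeping explicit and algebraic, partitioning the level sets into ascending and descending runs and telescoping away the boundary and linear terms.
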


\begin{proof}
We prove this by repeated use of the Gauss Summation Formula.
In particular, we use the Gauss summation formula on the squares
arising from thick levels.  Then note that when we do so, we count the
small squares arising from the thin levels twice.  To compensate, we
subtract the appropriate sums.  See \fullref{canc}.
\end{proof}

\begin{figure}[ht!]
\centerline{\includegraphics[width=.3\textwidth]{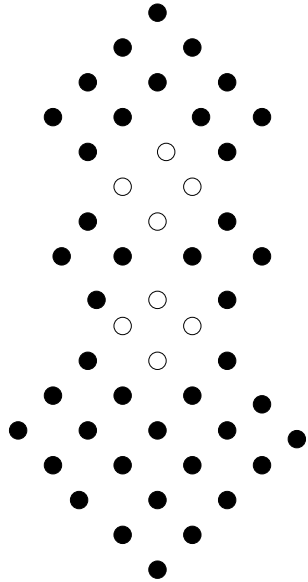}}
\caption{A cancellation principle}
\label{canc}
\end{figure}

One consequence of this Lemma is the following: When defining thin
position for knots in the traditional way as above, the relevant
information is captured in the thick and thin levels.  An alternate
definition would thus be to use ${\mathbb N}^{\infty}$ instead of
${\mathbb Z}$ for ${\cal O}$ and to let $f$ be the function that picks
out $g(h|_K^{-1}(r_i))$ for the thick and thin levels.  This would be
slightly more informative than the traditional definition.  Then, if
$f$ also rearranges the remaining entries in non increasing order, we
lose information.  In the applications of thin position of knots to
the study of 3--manifolds these subtleties in the definitions appear to
be irrelevant.

\section{Key features of thin position}

The notion of thin position was introduced by D. Gabai with a specific
purpose in mind.  It provided a way of describing a positioning of
knots in $\mathbb{S}^3$ that made certain arguments about
surfaces in the knot exterior possible.  The key feature of thin position for
a knot lies in the absence of disjoint pairs of upper and lower disks with
respect to a regular value $r$ of a Morse function: An upper (lower)
disk for a knot $K$ with respect to the regular level $r$ of a Morse
function $h$ is a disk $D$ whose boundary decomposes into two arcs,
$\alpha$ and $\beta$, such that $\alpha \in K$, $\beta \in h^{-1}(r)$
and such that $h(a) > h(r)$ ($h(a) < h(r)$) for all $a$ in the
interior of $\alpha$.  We emphasize that parts of the interior of a upper
(lower) disk may be below (above) $h^{-1}(r)$.

Now suppose that $K$ is in thin position with
respect to the Morse function $h$.  Further suppose that $D$ is an
upper disk for $K$ with respect to $r$ and $E$ is a lower disk for $K$
with respect to $r$.  If $D \cap E = \emptyset$, then we may isotope
the portion of $K$ in $\partial D$ just below $h^{-1}(r)$ and the
portion of $K$ in $\partial E$ just above $h^{-1}(r)$ to obtain a
presentation of $K$ that intersects $h^{-1}(r)$ four fewer times.  See
\fullref{disks} and \fullref{isotopy}.  It follows that after this
isotopy the width is reduced by exactly four if $K$ has exactly one maximum on 
$\partial D$ above $h^{-1}(r)$ and exactly one minimum on
$\partial E$ below $h^{-1}(r)$; if $K$ has more critical points on $\partial
D$ above $h^{-1}(r)$ or $\partial E$ below $h^{-1}(r)$ the width is reduced by
more than four.  (Note that if $D$ dips below $h^{-1}(r)$ or $E$ above it,
during the isotopy the width may increase, temporarily.)

\begin{figure}[ht!]
\labellist\small
\pinlabel {$D$} [br] at 125 95
\pinlabel {$E$} [tl] at 350 30
\pinlabel {$h^{-1}(r)$} [tl] at 465 65
\endlabellist
\centerline{\includegraphics[width=.7\textwidth]{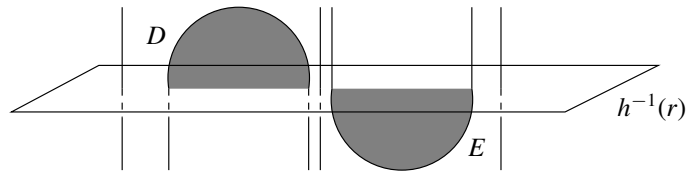}}
\caption{Two disks describing an isotopy}
\label{disks}
\end{figure}

\begin{figure}[ht!]
\labellist\small
\pinlabel {$h^{-1}(r)$} [tl] at 465 65
\endlabellist
\centerline{\includegraphics[width=.7\textwidth]{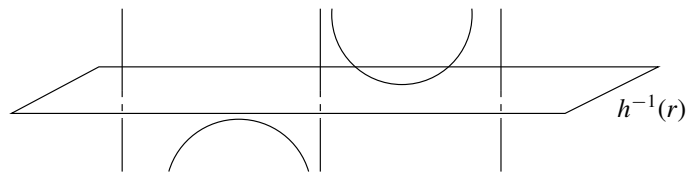}}
\caption{After the isotopy}
\label{isotopy}
\end{figure}

To make sense out of this isotopy from the point of view of thin
position, note that we may instead keep $K$ fixed and alter $h$ in
accordance with the isotopy.  We obtain a new Morse function $h'$ that
coincides with $h$ outside of a neighborhood of $D \cup E$ and such that
\[w_{h'}(K) \leq w_h(K) - 4.\]
But this contradicts the fact that $K$ is in thin position with
respect to $h$.

The situation is similar if $D \cap E$ consists of
one point.  There the relative width can be reduced by a count of
2 or more. See \fullref{disks'} and \fullref{isotopy'}.

\begin{figure}[ht!]
\labellist\small
\pinlabel {$D$} [br] at 125 95
\pinlabel {$E$} [tl] at 340 30
\pinlabel {$h^{-1}(r)$} [tl] at 465 65
\endlabellist
\centerline{\includegraphics[width=.7\textwidth]{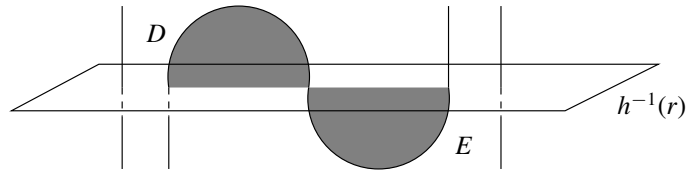}}
\caption{Two disks describing an isotopy}
\label{disks'}
\end{figure}

\begin{figure}[ht!]
\labellist\small
\pinlabel {$h^{-1}(r)$} [tl] at 465 65
\endlabellist
\centerline{\includegraphics[width=.7\textwidth]{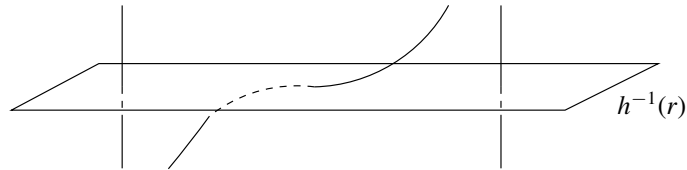}}
\caption{After the isotopy}
\label{isotopy'}
\end{figure}

Finally, consider the case in which $D \cap E$ consists of two points.
Then one subarc of $K$ lies in $\partial D$, another in $\partial E$
and the two meet in their endpoints.  It follows that $K$ can be
isotoped into the level surface $h^{-1}(r)$.  In the context of knots
in ${\mathbb S}^3$, $h^{-1}(r)$ is a 2--sphere and it then
follows that $K$ is trivial.

In the applications of thin position for knots to problems in
3--manifold topology the key feature used is the absence of disjoint
upper and lower disks with respect to a regular value.  This property
is termed {\it locally thin\/} by D\,J Heath and T Kobayashi who investigate
this property in \cite{HK2}.

When M Scharlemann and A Thompson introduced their notion of thin
position for 3--manifolds in \cite{ST}, they established a number of
properties enjoyed by a 3--manifold in thin position.  Let $M$ be a
3--manifold in thin position with respect to the Morse function $h$.
An upper (lower) compressing disk with respect to the regular value
$r$ is a disk whose boundary is an essential curve in $h^{-1}(r)$ 
whose interior, near $\partial D$ lies above (below) $h^{-1}(r)$; we further
impose that $\mbox{int}D \cap h^{-1}(r)$ consists entirely of curves that are
inessential in $h^{-1}(r)$.  This is analogous to an upper (lower) disk
dipping below (above) the level $h^{-1}(r)$.  Note that since the curves of
$\mbox{int}D \cap h^{-1}(r)$ are inessential in $h^{-1}(r)$, an upper (lower)
disk $D$ may be isotoped (relative to the boundary) to lie entirely above
(below) $h^{-1}(r)$.  However, this flexibility built into this somewhat
cumbersome definition is necessary for some applications.

This gives an analogy
with the situation for knots in ${\mathbb S}^3$: If there are upper
and lower disks with respect to $r$, then their boundaries must
intersect.

In fact, 3--manifolds in thin position enjoy a broader spectrum of
properties.  Some of these can be phrased in the language of Heegaard
splittings.  A {\em compression body} $W$ is a 3--manifold obtained
from a closed (and possibly empty) surface $\partial_- W$ by taking
$\partial_- W \times I$ (and, perhaps, some balls) and attaching
1--handles along $\partial_- W \times \{ 1 \} \subset \partial_- W
\times I$, where $I = [0,1]$, and the boundaries of the balls.  Then
$\partial_- W$ is identified with $\partial_- W \times \{0\}$ and
$\partial W \setminus \partial_- W$ is denoted $\partial_+ W$.
Dually, a compression body is obtained from a connected surface
$\partial_+ W$ by attaching 2--handles to $\partial_+ W \times \{ 0 \}
\subset \partial_+ W \times I$ and 3--handles to any resulting
2--spheres.  A Heegaard splitting of a closed 3--manifold $M$ is a
decomposition, $M = V \cup_S W$, into two handlebodies, $V, W$, such
that $S = \partial_+V = \partial_+W$.  A Heegaard splitting $M = V
\cup_S W$ is strongly irreducible if for any disk $(D, \partial D)
\subset (V, \partial_+V)$ with $\partial D$ essential in $\partial_+V$
and disk $(E, \partial E) \subset (W, \partial_+W)$ with $\partial E$
essential in $\partial_+W$, $E \cap D = \partial D \cap \partial E
\neq \emptyset$.  A surface $F$ in a 3--manifold $M$ is incompressible
if there is no disk in $M$ with boundary an essential curve on $F$ and
interior disjoint from $F$.

Some key properties that follow from those established by
M Scharlemann and A Thompson in \cite{ST} for a 3--manifold in thin
position are the following:
\begin{enumerate}
\item Every thin level is incompressible.
\item The thin levels cut the 3--manifold into (not necessarily connected)
submanifolds.
\item Each such submanifold contains one thick level.
\item The thick level defines a strongly irreducible Heegaard splitting
on the submanifold.
\end{enumerate}

\section{A digression: Strongly irreducible generalized Heegaard splittings}

Strongly irreducible generalized Heegaard splittings deserve to be
mentioned in this context.  A strongly irreducible generalized
Heegaard splitting of a 3--manifold is a sequence of disjoint surfaces
$S_1, F_1, \dots, F_{k-1}, S_k$ that has the following properties:
\begin{enumerate}
\item $S_1$ bounds a handlebody or cuts off a compression body;
\item $S_i$ and $F_i$ cobound a compression body and $S_i$ corresponds
to $\partial_+$;
\item $F_i$ and $S_{i+1}$ cobound a compression body and $S_{i+1}$
corresponds to $\partial_+$;
\item $S_k$ bounds a handlebody or cuts off a compression body;
\item the interiors of the aforementioned handlebodies
and compression bodies are disjoint;
\item $F_i$ is incompressible and $S_i$ is weakly incompressible.
\end{enumerate}
(Note that $S_i$ are strongly irreducible Heegaard surfaces for the
components of $M$ cut open along $\bigcup_{i=1}^k F_i$.)  We emphasize that
the surface above may be disconnected.

A Heegaard splitting corresponds to a handle decomposition which
corresponds to a Morse function.  Changing the order in which handles
are attached changes this Morse function by interchanging the levels
of the critical points, called a {\it handle slide}.  Given a 3--manifold $M$
and a Morse function 
$f$ corresponding to a Heegaard splitting $M = V \cup_S W$ we may
consider all Morse functions on $M$ that differ from $f$ only by
handle slides.  From the point of
view here, this gives us a condition $C$ that we impose on our Morse
functions.  Combining $C$ with $g$, ${\cal O}$ and $f$ as in the
definition of thin position for 3--manifolds yields a conditional
version of thin position for 3--manifolds.  A manifold decomposition
that is thin in this conditional sense is called an
\emph{untelescoping} of $M = V \cup_S W$.  More specifically, the
untelescoping, denoted by $S_1, F_1, \dots, F_{k-1}, S_k$, is obtained
by labeling the thick level surfaces by $S_i$ and the thin level
surfaces by $F_i$.  The results in \cite{ST} still apply in this
situation.  It follows that $S_1, F_1, \dots, F_{k-1}, S_k$ is a
strongly irreducible generalized Heegaard splitting.

The idea of untelescoping has been used very successfully in the study
of Heegaard splittings and topics related to Heegaard splitting.  We
have the following meta-theorem:

\begin{meta}
If a property holds for strongly irreducible Heegaard splittings, then
a related property holds for for all Heegaard splittings.
\end{meta}

Here is the idea behind this.  Suppose you want to prove a certain property,
let's call it $X$, and suppose that you can prove it for strongly
irreducible Heegaard splittings.  Then you should be able to prove $X$ for
essential surfaces, as these are much better behaved.  Now here's what you do: you
prove $X$ for $F_i$, since they are essential.  Then for $S_i$, since they are
strongly irreducible Heegaard splitting (although beware---they are
splittings of manifolds with boundary!).  Finally you retrieve the original
Heegaard splitting via a very well understood a process called amalgamation
\cite{Sch6}.  Now all that's left is to ask: {\it what is the related property
  that survives this ordeal?}

One of the first explicit applications of this meta-theorem can be seen
in the following two theorems concerning tunnel numbers of knots.  The
tunnel number of a knot is the least number of disjoint arcs that must
be drilled out of a knot complement to obtain a handlebody.  A
collection of such arcs is called a tunnel systems of the knot.
Tunnel systems of knots correspond to Heegaard splittings.  

A concept that deserves to be mentioned here is the following: A knot
is \emph{small} if its complement contains no closed essential
surfaces.  It follows that if a knot is small, then any tunnel system
realizing the tunnel number of the knot corresponds to a strongly
irreducible Heegaard splitting.

\begin{thm}[Morimoto--Schultens \cite{morisch}]
If $K_1, K_2$ are small knots, then \[t(K_1 \# K_2) \geq t(K_1) + t(K_2)\]
\end{thm}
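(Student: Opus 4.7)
The plan is to apply the meta-theorem of the previous section, exploiting the essential torus coming from the connected sum decomposition. Since $t(K) = g(E(K)) - 1$, the desired inequality is equivalent to
\[g(E(K_1 \# K_2)) \geq g(E(K_1)) + g(E(K_2)) - 1.\]
To produce this lower bound on Heegaard genus, I would start with a minimal genus Heegaard splitting $V \cup_S W$ of $E(K_1 \# K_2)$ and untelescope it, obtaining a strongly irreducible generalized Heegaard splitting with thick surfaces $S_1,\dots,S_k$ and thin surfaces $F_1,\dots,F_{k-1}$.

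The geometric input is the swallow-follow torus $T \subset E(K_1 \# K_2)$: an essential torus that separates the exterior into two pieces $M_1, M_2$, where $M_1$ is homeomorphic to $E(K_1)$ and $M_2$ is obtained from $E(K_2)$ by a Dehn drilling, so that $E(K_2)$ is recovered from $M_2$ by a meridional Dehn filling. The central technical step is to isotope $T$ so that it coincides with one of the thin surfaces $F_j$. I would first put $T$ and $\bigcup_j F_j$ in minimal essential position (possible since both are incompressible), and then invoke the standard fact that essential closed surfaces can be isotoped off the thick levels of a strongly irreducible generalized Heegaard splitting. At this stage the smallness of both summands enters crucially: since $E(K_i)$ contains no closed essential surface, every component of every $F_j$ lying in $M_i$ must be parallel into $\partial M_i$. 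Collapsing these parallelisms allows one to assume that $T$ itself appears as one of the thin surfaces.

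Once $T = F_j$, the generalized splitting cuts along $T$ into two strongly irreducible generalized Heegaard splittings, one on each side. Amalgamating each side produces Heegaard splittings of $M_1$ and $M_2$ of respective genera $g_1, g_2$. Because a two-step amalgamation (first on each side of $T$, then across $T$) reassembles a Heegaard splitting of $E(K_1 \# K_2)$ of the same genus as $S$, one obtains
\[g(S) = g_1 + g_2 - g(T) = g_1 + g_2 - 1.\]
By minimality of Heegaard genus $g_i \geq g(M_i)$, and a standard Dehn filling argument (filling cannot increase Heegaard genus) gives $g(M_i) \geq g(E(K_i))$. Combining the inequalities,
\[g(E(K_1 \# K_2)) \geq g(E(K_1)) + g(E(K_2)) - 1,\]
which, via $t(K) = g(E(K)) - 1$, becomes $t(K_1 \# K_2) \geq t(K_1) + t(K_2)$.

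The hardest step, and the one where the small knot hypothesis is unavoidable, is the positioning: arranging $T$ to coincide with a thin surface. Without smallness, one side of $T$ could harbor its own closed essential surfaces that would appear among the thin levels with nontrivial genus, and the clean genus bookkeeping above would collapse; this reflects the broader principle that additivity of tunnel number can fail in the absence of smallness.
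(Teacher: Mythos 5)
The paper does not actually prove this theorem---it is a survey, and the Morimoto--Schultens result is stated without proof as an illustration of the meta-theorem (untelescope, argue on thin levels and thick levels, amalgamate). So I can only assess your proposal on its own terms and against the framework the paper sketches.

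Your overall strategy---untelescope a minimal genus splitting of $E(K_1\#K_2)$, locate a decomposing surface among the thin levels, amalgamate on each side, then compare genera using Dehn filling---is the right framework and is faithful to the meta-theorem. But there are two substantive problems.

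First, the positioning step rests on a ``standard fact'' that is stated too strongly. It is \emph{not} true that an essential closed surface can always be isotoped off the thick levels of a strongly irreducible generalized Heegaard splitting; the correct statement (Scharlemann's lemma and its GHS version) is that the surface can be isotoped so that its intersection with each thick level is either empty \emph{or} consists of curves essential in both. An essential torus may very well meet the thick levels in essential circles, in which case it is not parallel to any thin level, and your claim ``$T = F_j$ for some $j$'' simply fails. This is precisely the case that must be ruled out, and your proposal does not do so. The hypothesis of smallness has to enter here, to control this intersection (or, as in the published argument, one works with the decomposing annulus rather than the torus and analyzes the arc/circle pattern of intersection with the thick levels directly). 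Your stated use of smallness---``since $E(K_i)$ contains no closed essential surface, every component of every $F_j$ lying in $M_i$ is parallel into $\partial M_i$''---is both logically circular (it refers to $M_i$ before $T$ has been made a thin level) and does not address the actual failure mode.

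Second, you correctly observe that $M_2$ is not $E(K_2)$ but rather $E(K_2)$ with a curve drilled out (it has two boundary tori). But then the assertion that components of thin levels lying in $M_2$ are $\partial$-parallel does not follow from smallness of $K_2$: smallness is a statement about $E(K_2)$, and a drilled manifold can acquire new closed essential surfaces. You handle the genus comparison for $M_2$ correctly via Dehn filling monotonicity, but the structural claim about its thin levels is unsupported. In short: the skeleton (untelescope, position, amalgamate, Dehn-fill) is right and matches the paper's meta-theorem, but the central ``positioning'' lemma is misstated, the role of smallness is misplaced, and the proof as written has a genuine gap exactly at the step you yourself flag as the hardest.
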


It is easy to see that $t(K_1 \#K_2) \leq t(K_1) + t(K_2) +1$, so this result
is quite tight.  The meta-theorem was also used to bound below the
degeneration of tunnel number for knots that are not necessarily small:

\begin{thm}[Scharlemann--Schultens \cite{SchSch}]
\[t(K_1 \# K_2) \geq \tfrac{2}{5}(t(K_1) + t(K_2))\]
\end{thm}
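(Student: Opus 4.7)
The plan is to deploy the meta-theorem: start from a minimum Heegaard splitting of $E(K_1\#K_2)$, untelescope it to a strongly irreducible generalized Heegaard splitting, cut the splitting open along the essential decomposing annulus, and amalgamate on each side to recover Heegaard splittings of $E(K_1)$ and $E(K_2)$. A careful Euler characteristic count across the cut then yields the $2/5$ bound.

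Set $X=E(K_1\#K_2)$, $X_i=E(K_i)$, and recall $t(K) = g(X_K)-1$, so it suffices to prove $g(X_1)+g(X_2) \leq \tfrac{5}{2}(g(X)-1)+2$. First I would take a minimum genus Heegaard splitting $V\cup_S W$ of $X$ and apply Scharlemann--Thompson to untelescope $S$ into a strongly irreducible generalized Heegaard splitting with thin (incompressible) surfaces $F_1,\dots,F_{k-1}$ and thick (strongly irreducible) surfaces $S_1,\dots,S_k$. The decomposing sphere for the connected sum meets $K_1\#K_2$ in two points, so it restricts to an essential, $\partial$--incompressible annulus $A\subset X$ whose boundary consists of two meridians on $\partial X$; cutting $X$ along $A$ and reattaching two meridional disks recovers $X_1\sqcup X_2$.

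Next I would isotope $A$ into standard position with respect to the generalized splitting. Intersections of $A$ with each $F_i$ can be made essential in both surfaces using incompressibility. For each thick $S_j$, strong irreducibility allows me to mimic the no-disjoint-upper-and-lower-disk argument from the ``Key features'' section (in the guise of compressing disks for $S_j$ on either side versus subdisks of $A$) to arrange that intersection loops are essential on $S_j$ and that arc components are ``tight'' --- no pair of arcs on opposite sides of $S_j$ bounds disjoint outermost disks. Cutting along $A$ then turns each thick $S_j$ into pieces that still form strongly irreducible Heegaard surfaces for the corresponding cut submanifolds, and each $F_i$ into incompressible surfaces. Amalgamating these cut generalized splittings on each side via Schultens' amalgamation lemma yields honest Heegaard splittings of $X_1$ and $X_2$, whose genera $\gamma_i$ bound $g(X_i)$ from above.

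The main obstacle is the final accounting step, which is where the constant $2/5$ (rather than $1$, as in the small-knot theorem of Morimoto--Schultens) actually enters. The total $-\chi$ of the thick surfaces minus that of the thin surfaces equals $2(g(X)-1)$; cutting along $A$ alters each $-\chi(S_j)$ and $-\chi(F_i)$ by the number of arc components of their intersection with $A$, and amalgamation repackages these into $\gamma_1+\gamma_2-2$. In the worst case an arc of $A\cap S_j$ can contribute to both $\gamma_1$ and $\gamma_2$, so naive duplication would give a factor of $2$; the constraints imposed by strong irreducibility (bounding how many essential arcs and loops $A$ can have on each $S_j$) and by the amalgamation formula together cap the duplication at $5/2$. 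Making this combinatorial trade-off precise --- in particular, showing that the essential and strongly irreducible structure really does enforce $\gamma_1+\gamma_2-2\leq \tfrac{5}{2}(g(X)-1)$ --- is the delicate heart of the proof, and replacing the $5/2$ by anything smaller would require new structural input beyond the meta-theorem.
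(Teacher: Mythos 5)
The paper does not prove this theorem; it only cites Scharlemann--Schultens, so your proposal has to stand on its own merits. Your skeleton matches the actual argument: untelescope a minimal-genus Heegaard splitting of $E(K_1\#K_2)$, position the decomposing annulus $A$ with respect to the strongly irreducible generalized splitting, cut, and run an Euler characteristic count. But there is a concrete error in the middle. The thick and thin surfaces $S_j, F_i$ of a generalized Heegaard splitting of a knot exterior are closed, while $A$ is a properly embedded annulus with $\partial A\subset\partial E(K_1\#K_2)$; hence $A\cap S_j$ and $A\cap F_i$ consist of circles, not arcs, and the accounting you describe (``alters $-\chi$ by the number of arc components'') does not apply. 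After discarding inessential circles by innermost-disk arguments, the thick and thin levels chop $A$ into parallel subannuli, and the control Scharlemann--Schultens actually exploit comes from classifying how these subannuli sit inside the compression bodies of the splitting (spanning versus $\partial$-parallel), not from the Gabai upper/lower-disk maneuver you invoke, which is the wrong tool here. (A smaller slip: cutting $E(K_1\#K_2)$ along $A$ already yields $E(K_1)\sqcup E(K_2)$; no meridional disks need to be reattached.)

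The larger problem is the gap you flag yourself. You assert that cutting along $A$ turns each $S_j$ into a strongly irreducible Heegaard surface for the corresponding cut piece and that the pieces then amalgamate back to Heegaard splittings of $E(K_1)$ and $E(K_2)$. Neither is automatic: the cut surfaces have boundary on the copies of $A$ and must be capped off, surgered, and reorganized into genuine generalized Heegaard splittings before the amalgamation formula can be applied, and it is precisely in this reorganization --- how many compressions and re-gluings the annular pieces force --- that the ratio $5/2$ is generated. You are candid that showing $\gamma_1+\gamma_2-2\le\tfrac{5}{2}(g(X)-1)$ is ``the delicate heart of the proof,'' but as stated the proposal gives no mechanism for this bound; it reduces the theorem to the one lemma it does not prove.
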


We now describe another application of our meta-theorem.   A knot is called
\emph{m-small} if the meridian does not bound an essential surface; by
Culler, Gordon, Luecke and Shalen \cite[Lemma~2.0.3]{CGLS} all small knots in $\mathbb{S}^3$ are m-small.
However, minimal tunnel systems of m-small knots do not always correspond to
strongly irreducible Heegaard splittings.  Let
$K$ be a knot and $t$ its tunnel number.  Denote the bridge number of a knot
$K$ with respect to a genus $t$  Heegaard splitting by $b_1(K)$.  Morimoto
observed that if $b_1(K_1) = 1$  (such knots are also called $(t,1)$ knots)
then the tunnel number degenerate:
$t(K_1 \# K_2) < t(K_1) + t(K_2) + 1$.  He  conjectured that this is a
necessary and sufficient condition and proved this conjecture for m-small
knots in $\mathbb{S}^3$ \cite{morimoto-annalen}.  This was generalized by
applying the meta-theorem:

\begin{thm}[Kobayashi--Rieck \cite{kr-cag}]
Let $K_1 \subset M_1,\dots,K_n \subset M_n$ be m-small knots.

Then $t(\#_{i=1}^n K_1) < \Sigma_{i=1}^n t(K_i) + n-1$ if and only if there
exists a non-empty proper subset $I \subset \{1,\dots,n\}$ so that $b_1(\#_{i \in
  I} K_i) = 1$. 
\end{thm}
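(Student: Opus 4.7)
The plan is to prove both directions separately, with the reverse implication being essentially immediate and the forward implication being where the meta-theorem and thin position machinery does its work.

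For the easy ``if'' direction, suppose that for some non-empty proper $I \subset \{1,\dots,n\}$ we have $b_1(\#_{i \in I} K_i) = 1$. The bridge-tunnel trick says that whenever $K'$ admits a $(t(K'),1)$-presentation, then for any knot $K''$ one has $t(K' \# K'') \leq t(K') + t(K'')$ rather than the generic $t(K') + t(K'') + 1$. Applying this to $K' = \#_{i \in I} K_i$ and $K'' = \#_{j \notin I} K_j$, together with the iterated bound $t(\#_{i \in I} K_i) \leq \sum_{i \in I} t(K_i) + |I| - 1$ and the analogous bound for the complement of $I$, yields
\[t(\#_{i=1}^n K_i) \leq \sum_{i=1}^n t(K_i) + n - 2 < \sum_{i=1}^n t(K_i) + n - 1.\]

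For the ``only if'' direction, assume strict degeneracy. Take a minimal-genus Heegaard splitting of the exterior $E = E(\#_{i=1}^n K_i)$; its genus is $t(\#K_i) + 1 < \sum_i t(K_i) + n$. Apply the meta-theorem: untelescope this splitting to obtain a strongly irreducible generalized Heegaard splitting $S_1, F_1, \dots, F_{k-1}, S_k$ of $E$ with $F_j$ incompressible and $S_j$ strongly irreducible. The objects that encode the connect-sum structure are the swallow-follow tori $T_i$, one for each decomposing 2-sphere of $\#M_i$; these are essential tori in $E$ that do \emph{not} have meridional boundary issues because they are closed.

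Now push each $T_i$ into general position with respect to the thin and thick surfaces and minimize intersections in a standard fashion. For each $T_i$, the incompressibility of the $F_j$ and the strong irreducibility of the $S_j$ restrict the picture: either $T_i$ is isotopic into a thin level, or it meets some thick level $S_j$ in essential curves. At this stage the m-small hypothesis is crucial: any essential surface in any $E(K_i)$ is non-meridional, so the curves of intersection of $T_i$ with the thin levels cannot be meridians of $K_i$, which in turn forces each $T_i$ to either become parallel into a thin level or cap off in a very restricted way against an $S_j$. Tracking these possibilities across all swallow-follow tori partitions the connect summands into groups; each group corresponds to a piece of the untelescoping bounded by (some of) the $T_i$.

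The main obstacle, and the heart of the argument, is the genus accounting on these pieces. For each piece one obtains a strongly irreducible Heegaard splitting of a sub-connect-sum $\#_{i \in J} K_i$ (with boundary tori from swallow-follow), and amalgamating back recovers the original splitting of $E$. Summing the genera, the subadditivity of tunnel number under connect sum along m-small knots (which was the content of the Morimoto--Schultens-type argument) shows that equality $t(\#_{i=1}^n K_i) = \sum t(K_i) + n - 1$ would hold unless some piece exhibits extra savings. The only source of such savings, given the m-small hypothesis, is that one piece corresponds to a subset $I$ for which the strongly irreducible splitting of $\#_{i \in I} K_i$ of genus $t(\#_{i \in I} K_i) + 1$ admits the knot as a 1-bridge curve in the Heegaard surface, i.e.\ $b_1(\#_{i \in I} K_i) = 1$. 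Isolating this subset $I$ and verifying that it is both non-empty and proper (non-empty because savings must occur somewhere, proper because an all-$I$ reduction would still leave the global inequality unchanged without a splitting of the connect sum itself) completes the proof.
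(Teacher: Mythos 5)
The paper itself gives no proof of this theorem; it is quoted from \cite{kr-cag} purely as an illustration of how the untelescoping meta-theorem has been applied, with no argument supplied. So there is no ``paper's own proof'' to compare against, and I can only assess the sketch on its own.

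Your ``if'' direction is correct and essentially complete: it is precisely Morimoto's observation that a summand $K'$ with $b_1(K')=1$ admits a $(t(K'),1)$-presentation, from which $t(K'\#K'')\le t(K')+t(K'')$ follows, combined with the elementary bound $t(\#_{i\in I}K_i)\le\sum_{i\in I}t(K_i)+|I|-1$ on each side of the partition. Note this direction does not use m-smallness at all, which is as it should be.

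The ``only if'' direction names the right toolkit — a minimal-genus Heegaard splitting of the exterior, its untelescoping into a strongly irreducible generalized splitting, swallow-follow tori, and the use of m-smallness to constrain essential meridional surfaces — and this is indeed the setting of \cite{kr-cag}. But the sketch stops exactly where the work begins. The pivotal assertion, that ``the only source of such savings, given the m-small hypothesis, is that one piece \dots admits the knot as a 1-bridge curve,'' is the whole content of the forward implication and is simply stated, not argued. You do not show why m-smallness forces the swallow-follow tori into the claimed normal form relative to the thin levels, how the pieces of the untelescoping are matched up with sub-connect-sums $\#_{i\in J}K_i$, or how the genus bookkeeping across amalgamation produces a specific subset $I$ with $b_1(\#_{i\in I}K_i)=1$ rather than, say, a diffuse saving distributed across several pieces. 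The parenthetical ``verification'' that $I$ is non-empty and proper is likewise a gesture rather than an argument. As written, this is a plausible plan of attack faithful to the cited strategy, but it is not a proof; the intersection lemmas and amalgamation accounting of \cite{kr-cag} would be needed to close the gap.
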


Oddly enough, the exact same meta-theorem that led to the generalization of
Morimoto's Conjecture for m-small knots, also led to disproving it:

\begin{thm}[Kobayashi--Rieck \cite{kr1,kr2}]
There exist knots $K_1, \ K_2 \subset \mathbb{S}^3$ so that $b_1(K_1) > 1$ and
$b_1(K_2) > 1$ but:
\[t(K_1 \# K_2) \leq t(K_1) + t(K_2).\]
\end{thm}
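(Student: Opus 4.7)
The plan is to exhibit explicit knots $K_1,K_2\subset\mathbb{S}^3$ satisfying all three conditions: $b_1(K_1)>1$, $b_1(K_2)>1$, and $t(K_1\#K_2)\leq t(K_1)+t(K_2)$. Since this is an existence statement contradicting a longstanding conjecture, essentially everything rests on building suitable examples, so I would start by searching among families of knots for which tunnel and bridge data are computable, such as $2$-bridge knots, cable knots, or knots obtained from tangle constructions whose Heegaard splittings have been analyzed previously.

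First, for each candidate $K_i$ I would pin down $t(K_i)$ exactly, using already-known values or a direct Morse-theoretic calculation, and then construct an explicit tunnel system for $K_1\#K_2$ with only $t(K_1)+t(K_2)$ arcs. The ``naive'' connect-sum tunnel system always gives $t(K_1)+t(K_2)+1$, so the crux of the upper bound is to find a single handle slide (or equivalently, an isotopy of the corresponding Heegaard surface) that lets one tunnel do double duty, straddling the summing sphere. Guided by the meta-theorem and the untelescoping philosophy of Section~5, I would look for candidates whose minimal Heegaard splittings untelescope across the essential summing sphere in a way that exposes an extra compression on each side that can be cancelled after amalgamation.

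The hard part is the lower bound $b_1(K_i)>1$, i.e.\ ruling out the possibility that either knot sits as a $1$-bridge knot with respect to some genus $t(K_i)$ Heegaard surface of $\mathbb{S}^3$. For this I would assume $b_1(K_i)=1$ and derive a contradiction by analyzing the induced strongly irreducible generalized Heegaard splitting of the exterior $E(K_i)$: a $(t,1)$-presentation endows $E(K_i)$ with a genus $t+1$ Heegaard splitting in which the meridian is primitive, and from this one can extract structural consequences (for example, the existence of certain essential annuli, or restrictions coming from the Hempel distance of the splitting, or an essential surface ruled out by the specific construction of $K_i$). By choosing the $K_i$ so that their exteriors fail to admit any such primitive meridian structure, the assumption $b_1(K_i)=1$ collapses.

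The main obstacle is precisely this last step: showing $b_1(K_i)>1$ is an intrinsically negative assertion about all possible minimal tunnel systems of $K_i$, and cannot be verified by exhibiting a single object. I expect the proof to proceed by combining the thin position/untelescoping machinery described above with careful combinatorial control of how a hypothetical single bridge arc would interact with the distinguishing essential surface or Heegaard-distance invariant of $K_i$, and that the knots must be engineered so that this control actually forces a contradiction while still allowing the connect-sum tunnel economy from the previous step.
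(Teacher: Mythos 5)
The paper you are reading is a survey; it states this theorem with a citation to the original Kobayashi--Rieck papers \cite{kr1,kr2} and gives no proof whatsoever, so there is nothing in the paper to compare your argument against line by line. The only internal hint the survey provides is that the same meta-theorem (untelescope a Heegaard splitting into a strongly irreducible generalized Heegaard splitting, argue piece by piece, and reassemble by amalgamation) that powers the positive m-small result was also the engine behind this counterexample.

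Judged on its own terms, your proposal is a plausible high-level roadmap but it is not a proof, and the gap is the entire content of the theorem. You never name the knots $K_1,K_2$; you never compute $t(K_i)$; you never exhibit the tunnel system of size $t(K_1)+t(K_2)$ for $K_1\#K_2$; and you never carry out the argument excluding a $(t,1)$-presentation. Every one of your paragraphs ends with ``I would look for,'' ``I would pin down,'' ``I expect the proof to proceed by'' --- which is exactly the step at which the mathematics has to actually happen. In particular, your instinct that the lower bound $b_1(K_i)>1$ is the hard part is correct: it is a universally quantified statement over all genus-$t$ Heegaard surfaces of $\mathbb{S}^3$ and cannot be dispatched by a single example or by generalities about primitive meridians, Hempel distance, or essential annuli without a concrete knot in hand to which those tools apply. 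The Kobayashi--Rieck papers succeed precisely because they engineer specific knots (built so that the relevant essential surfaces and Heegaard splittings of the exteriors are controllable) and then run the untelescoping machinery against that concrete structure; a proof of the theorem must do the same, and until the candidates are produced and the three inequalities are verified for them, the statement remains unproved.
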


\section{Additivity properties}
\label{section:additivity}

Widths of knots behave erratically under connected sum of knots.
Progress in understanding this phenomenon is obstructed by the fact
that little is known about the width of specific knots.  A. Thompson
was one of the first to investigate knots in thin position in their
own right.  A knot is called \emph{meridionally planar small} (or mp-small)
if the meridian does not bound an essential meridional surface.  By
definition m-small knots are mp-small.  As mentioned
above, by the highly technical \cite[Lemma~2.0.3]{CGLS}, small knots are
m-small; thus, the family of mp-small knots contains all small knots.

Thompson proved the following theorem, see \cite{T}:

\begin{thm}[Thompson]
\label{abby}
If $K \subset S^3$ is mp-small
then a height function $h$ realizing the width
of $K$ has no thin levels.  
\end{thm}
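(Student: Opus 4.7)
The plan is to argue by contradiction. Suppose $h$ realizes $w(K)$ and yet admits a thin level $F = h^{-1}(r_i)$. Since $h$ is a standard height function on $\mathbb{S}^3$, $F$ is a 2-sphere; set $P = F \cap E(K)$, a planar surface properly embedded in the knot exterior $E(K)$ with meridional boundary. I would show that $P$ is essential in $E(K)$, contradicting the mp-small hypothesis on $K$. The thin condition on $F$ means that $|K \cap F| < |K \cap h^{-1}(r)|$ for $r$ in small neighborhoods above and below $r_i$, which forces the existence of a local minimum $m$ of $h|_K$ just above $F$ and a local maximum $M$ of $h|_K$ just below $F$; in particular, $P$ has at least two boundary components and is nontrivial.

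The core task is to show $P$ is incompressible and meridionally incompressible in $E(K)$. Suppose first that $P$ admits a compressing disk $D$, which lies in one of the two balls into which $F$ divides $\mathbb{S}^3$; say above $F$. I would combine $D$ with the descending arc of $K$ to the minimum $m$ to produce an \emph{upper disk} for $K$ with respect to $r_i$, in the sense of the ``Key features of thin position'' section. Symmetrically, the maximum $M$ below $F$ is used to build a \emph{lower disk}. If these can be arranged to be disjoint, the isotopy described earlier in the paper strictly reduces $w_h(K)$, contradicting that $h$ realizes width. The same scheme, with a meridional cut-disk in place of a compressing disk, handles meridional compressibility: a cut-disk provides a band along which a strand of $K$ slides through $F$ to a lower-width position.

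The hard part will be establishing the disjointness of the upper and lower disks so constructed: an arbitrary compressing disk need not be disjoint from the arcs of $K$ needed to build its mate on the other side of $F$. The plan is to take $D$ transverse to $K$ and to the adjacent level surface, minimize $|D \cap K|$ and $|D \cap h^{-1}(r)|$ for $r$ near $r_i$, and then use outermost-arc arguments on these intersection patterns to replace $D$ by a disk whose associated upper (or lower) disk automatically misses the mate on the opposite side, exploiting only that $F$ is flanked by $m$ above and $M$ below. Ruling out boundary-parallelism of $P$ is easier: parallelism would force $F$ to cobound a product region with a collar of $\partial N(K)$ meeting $K$ only in vertical arcs, inconsistent with $F$ being a thin level. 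Once these are in place, $P$ is an essential meridional planar surface in $E(K)$, contradicting mp-small, so no thin level can exist.
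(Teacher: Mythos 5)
The paper's argument does not try to show that a thin level is incompressible; it compresses the thin-level planar surface maximally and then shows that the resulting \emph{incompressible} meridional planar surface has at least one essential component (one that is not a boundary-parallel annulus). Your plan, by contrast, tries to prove that the thin level $P$ is itself incompressible and meridionally incompressible, and this is where the genuine gap lies. Thin levels for a knot in thin position need not be incompressible --- the paper stresses this repeatedly, and Wu's theorem (quoted later in the paper) only guarantees incompressibility of the \emph{thinnest} thin level. At the start of the contradiction argument you cannot assume the thin level is incompressible, and no argument of the kind you sketch will establish it, because it is false in general.

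The specific mechanism you propose also does not work. A compressing disk $D$ for $P$ has boundary an essential circle in $P$, disjoint from $K$; it is not a disk bounded by an arc of $K$ together with an arc of $F$, so it does not ``combine with the descending arc of $K$'' to yield an upper disk in the sense of the paper's key-features section. Even if you independently construct an upper disk from the minimum $m$ above $F$ and a lower disk from the maximum $M$ below $F$, the disjointness you yourself flag as the hard part is exactly the obstruction: there is no reason the compressing disk or your outermost-arc surgery is compatible with keeping those disks disjoint, and if it were, you would have proved (without using mp-small at all) that thin levels are always incompressible, which is false. The correct route is the paper's: take the thin level, compress repeatedly until you obtain an incompressible meridional planar surface, and then do the real work of showing one of its components is essential rather than a boundary-parallel annulus; mp-small then gives the contradiction. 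Your closing remark about ruling out boundary-parallelism of $P$ is aimed at the wrong surface --- it is the \emph{compressed} surface whose components must be shown non-boundary-parallel, and that step is the substantive part of Thompson's proof.
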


The idea of the proof is: a thin level would give a meridional planar surface.
Compressing this surface yields an incompressible meridional planar surface.
Some amount of work then shows that this incompressible meridional planar
surface has an essential component (that is, a component that is not a
boundary parallel annulus).   In \fullref{sec:compressibility} we discuss
generalizations of this theorem.

Thus an mp-small knot $K$ in thin position has some number (say $m$) of maxima
and $m$ minima and all the maxima are above the minima.  By
\fullref{bridgecount} the width of $K$ is exactly
\[w(K) = 2m^2.\]
Let $b$ be the bridge number of $K$, that is.  Clearly, $m \geq b$.  On the
other hand, after placing $K$ in 
bridge position its width is $2b^2$, showing that $b \geq m$.  We conclude
that $m$ is the bridge number.  This is summarized in the following
corollary which is sometimes referred to informally by saying that for
mp-small knots ``thin position = bridge position'':

\begin{cor}[Thompson]
If the knot $K$ in ${\mathbb S}^3$ is mp-small, then thin
position for $K$ is bridge position.
\end{cor}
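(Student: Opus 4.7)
The plan is to derive the corollary directly from Thompson's theorem together with the Gauss-summation count underlying \fullref{bridgecount}.

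First I would fix a Morse function $h$ realizing the width $w(K)$, so that $K$ is in thin position with respect to $h$. By \fullref{abby}, since $K$ is mp-small, the function $h$ has no thin levels; equivalently, the sequence $(\#|K\cap h^{-1}(r_i)|)_i$ is unimodal, increasing to a peak and then decreasing. This is precisely the condition that every maximum of $h|_K$ lies above every minimum, so $K$ is in bridge position with respect to $h$. Let $m$ denote the common number of maxima and minima.

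Next I would apply the Gauss-summation argument that appears in the proof of \fullref{bridgecount} to this unimodal sequence, obtaining $w(K) = w_h(K) = 2m^2$. It remains to identify $m$ with the bridge number $b$ of $K$. On one hand, $h$ itself exhibits $K$ as a bridge presentation with $m$ bridges, so $b \leq m$. On the other hand, choose a height function $h'$ realizing a minimal bridge presentation of $K$; the same counting principle gives $w_{h'}(K) = 2b^2$, and the minimality of $w(K)$ forces $2m^2 = w(K) \leq w_{h'}(K) = 2b^2$, hence $m \leq b$. Combining gives $m = b$, so the presentation of $K$ with respect to $h$ is an optimal bridge presentation, i.e.\ thin position for $K$ is bridge position.

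The only substantive input is Thompson's theorem, which supplies the absence of thin levels; once that is invoked, the remainder is a clean comparison of the Gauss-summation widths of the thin presentation and of an optimal bridge presentation. Consequently the main obstacle lies entirely upstream, in the proof of \fullref{abby}, where one must argue that a thin level in an mp-small knot would yield an essential meridional planar surface, contradicting mp-smallness. Assuming that result, the corollary follows by the two-line width comparison sketched above.
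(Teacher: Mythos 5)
Your proposal is correct and follows essentially the same route as the paper: invoke Thompson's theorem to rule out thin levels and conclude the thin presentation is a bridge presentation with, say, $m$ bridges, compute $w_h(K)=2m^2$ via the Gauss-summation count of \fullref{bridgecount}, and then compare against the width $2b^2$ of an optimal bridge presentation to force $m=b$. The only stylistic difference is that you re-run the Gauss-summation computation directly rather than citing \fullref{bridgecount} as a black box, but the substance is identical.
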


The greatest challenge in this and many other investigations of thin
position for knots is that thin levels need not be incompressible.
This fact is used to advantage by D Heath and T Kobayashi in
\cite{HK1} to produce a canonical tangle decomposition of a knot and
in \cite{HK3} to produce a method to search for thin presentations of a
knot.  M Tomova has made strides in understanding this phenomenon,
see \cite{To}.  We discuss these theories below.  In \cite{HK1},
D Heath and T Kobayashi also exhibit a knot containing a meridional
incompressible surface that is not realized as a thin level in a thin
presentation of the knot.  This propounds the idea that a decomposing
sphere for a connected sum need not be realized as a thin level in a
thin presentation of a composite knot.

One thing we do know concerning additivity properties of width of
knots is the following:
\[w(K_1 \# K_2) \leq w(K_1) + w(K_2) -2\]
To see this, stack a copy of $K_1$ in thin position on top of a copy
of $K_2$ in thin position.  The width of the connected sum is then
bounded above by the relative width of the resulting presentation.

\begin{figure}[ht!]
\labellist\small
\pinlabel {$K_1$} at 72 236
\pinlabel {$K_2$} at 72 56
\endlabellist
\centerline{\includegraphics[width=.1\textwidth]{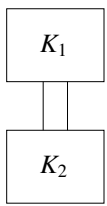}}
\caption{Connected sum of knots}
\label{sum}
\end{figure}

A result of Y Rieck and E Sedgwick proven in \cite{RS} can be
paraphrased as follows:

\begin{thm}[Rieck--Sedgwick] 
If $K_1, K_2$ are mp-small knots, then thin position of
 $K_1 \# K_2$ is related to thin position of $K_1, K_2$ as pictured in
 \fullref{sum}.  In particular, \[w(K_1 \# K_2) = w(K_1) + w(K_2)
 -2\]
\end{thm}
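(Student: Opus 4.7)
First, recall that the upper bound $w(K_1 \# K_2) \leq w(K_1) + w(K_2) - 2$ has already been established by stacking, and that Thompson's corollary gives $w(K_i) = 2b(K_i)^2$ for mp-small knots. The task therefore reduces to proving the reverse inequality
\[w(K_1 \# K_2) \geq 2b_1^2 + 2b_2^2 - 2,\]
where $b_i = b(K_i)$; we may assume both $K_i$ are nontrivial, since otherwise the statement is immediate.

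The plan is to fix a Morse function $h$ realizing $w(K)$ for $K = K_1 \# K_2$ and show that $h$ must exhibit $K$ in the ``stacked'' form of \fullref{sum}. The first step is to verify that $h$ has at least one thin level. If it had none, then $K$ would be in bridge position with respect to $h$, and Schubert's additivity of bridge number gives $b(K) = b_1 + b_2 - 1$, so by \fullref{bridgecount} we would have $w_h(K) = 2(b_1 + b_2 - 1)^2 = 2b_1^2 + 2b_2^2 - 2 + 4(b_1 - 1)(b_2 - 1)$. Since both $K_i$ are nontrivial, $(b_1 - 1)(b_2 - 1) \geq 1$, and this strictly exceeds the upper bound, contradicting thin position.

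The next step is to analyze the thin levels. Each thin level is a meridional planar surface in the exterior of $K$, and by Thompson's argument (compression along outermost disks of meridian disks for the surrounding compression bodies) one extracts a meridional surface $F'$ at least one component of which is essential, that is, not a boundary-parallel annulus. Since the summands are mp-small, no such essential component can be carried by the exterior of $K_i$ alone, and an exchange argument forces one essential component of $F'$ to be a decomposing 2-sphere $S$ realizing the connect-sum decomposition. A Morse-theoretic cleanup then perturbs $h$ to install $S$ as a horizontal level surface at a regular value, without increasing $w_h(K)$.

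With $S$ present as a horizontal decomposing sphere, $S$ cuts $K$ into two summands $K_1'$ and $K_2'$, each a strand-completion of $K_i$ inside a 3--ball. Any additional thin level of $h$ on either side would, by the compression argument above, produce an essential meridional surface in the exterior of the corresponding $K_i$, contradicting mp-smallness. Hence above $S$ and below $S$ the knot is in bridge position, giving the stacked configuration of \fullref{sum}. The single thin level $S$ meets $K$ in two points, so \fullref{widthcount} yields
\[w(K) = 2b_1^2 + 2b_2^2 - 2 \cdot 1^2 = w(K_1) + w(K_2) - 2,\]
matching the lower bound. The main obstacle is the Morse-theoretic cleanup in the third paragraph: one must show that the essential decomposing sphere extracted by compressing a thin level can be realized as an actual level surface of a Morse function achieving the minimum width, which requires a careful analysis of how the compressing disks interact with the flow of $h$ and a controlled isotopy that does not introduce new critical points of larger index.
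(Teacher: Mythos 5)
Your first step — ruling out bridge position via Schubert's formula and the arithmetic inequality $2b_1^2+2b_2^2-2 < 2(b_1+b_2-1)^2$ — is exactly the paper's argument, and your final accounting via \fullref{widthcount} is also fine. But the middle of your proof diverges from Rieck--Sedgwick's route in a way that leaves a genuine gap, and it is precisely the step you flag yourself: the ``Morse-theoretic cleanup'' that installs the decomposing sphere $S$ as a level surface without increasing width. There is no general principle that lets you do this. Once you have located $S$ somewhere in $\mathbb{S}^3$, isotoping it to be horizontal is equivalent to replacing $h$ by some $h'$, and nothing controls $w_{h'}(K)$ during that isotopy. Indeed the paper explicitly cites a Heath--Kobayashi example of a meridional incompressible surface that is \emph{not} realizable as a thin level in any thin presentation, precisely to caution that ``a decomposing sphere for a connected sum need not be realized as a thin level.'' So the assertion you treat as a cleanup step is essentially the content of the theorem, not a routine perturbation.

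Rieck and Sedgwick avoid this trap by arguing in the opposite direction. Rather than producing a decomposing sphere abstractly and then trying to force it to be level, they work with the decomposing \emph{annulus} in the knot exterior and show (a) that a spanning arc of that annulus can be isotoped into one of the \emph{existing} thin levels of the given thin presentation, and then (b) that a thin level containing such a spanning arc must itself be a decomposing annulus (equivalently, the corresponding thin sphere is a decomposing sphere). This identifies one of the thin levels that is already there with the decomposing surface, so no re-leveling isotopy and no width control for a new Morse function is needed. Your outline has no substitute for steps (a) and (b); the ``exchange argument'' in your third paragraph and the appeal to mp-smallness give you an essential planar meridional surface, but not its placement as a level sphere of a width-minimizing $h$. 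A secondary, smaller issue: in your fourth paragraph you invoke mp-smallness of $K_i$ to rule out thin levels on either side of $S$, but the surfaces you would be looking at live in the exterior of a tangle in a $3$--ball, not directly in the exterior of $K_i \subset \mathbb{S}^3$; one needs to say why capping off the ball transports an essential planar meridional surface of the tangle to one of the knot. Both the main gap and this secondary point are exactly the ``more technical'' part that the paper defers to \cite{RS}.
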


Given a presentation of $K_1 \# K_2$ in thin position and a
decomposing sphere $S$, Y. Rieck and E. Sedgwick proceed as follows:
They first show that the connected sum must have a thin level.  This
is accomplished as follows: For any knot $K$, \fullref{bridgecount}
gives $w(K) \leq 2b(K)^2$, where $b(K)$ is the bridge number of $K$. A
result of Schubert \cite{S}, states that the bridge number of knots is
subadditive, that is,
\[b(K_1 \# K_2) = b(K_1) + b(K_2) - 1.\]
A standard computation shows that the function
\[f(x, y) = xy - x - y + 1\]
is strictly greater than $0$ for $x, y \geq 2$.  Thus since bridge
number is always at least $2$,
\[w(K_1) + w(K_2) - 2 \leq 2b(K_1)^2 + 2b(K_2)^2 - 2 < 2(b(K_1) +
b(K_2) - 1)^2.\]
Hence thin position can't be bridge position for $K_1 \# K_2$,
there must be a thin level.

Their next steps are more technical: They show that for any
decomposing annulus in the complement of $K_1 \# K_2$, a spanning
arc can be isotoped into a thin level.  Finally, they show that
a thin level containing the spanning arc of a decomposing annulus must
in fact be a decomposing annulus.  This establishes their result.

Note that the application of Schubert's Theorem above shows that for any knot
$K_1$ and $K_2$, after placing $K_1 \# K_2$ in thin position a bridge
position is not obtained, in the sense that there is a thin sphere.  However,
counting the number of maxima in \fullref{sum} shows that:

\begin{cor}[Rieck--Sedgwick]
If $K_1$ and $K_2$ are mp-small knots,  then the number of maxima for $K_1 \#
K_2$ in thin position is the bridge number of $K_1 \# K_2$.  
\end{cor}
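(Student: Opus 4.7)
The plan is to extract the count of maxima directly from the Rieck--Sedgwick theorem just stated, combined with Thompson's corollary and Schubert's additivity of bridge number. I would first invoke the theorem: in thin position, $K_1 \# K_2$ is presented as in \fullref{sum}, namely by stacking a thin-position copy of $K_1$ above a thin-position copy of $K_2$ and connecting them through a single summing arc. This reduces the problem to counting maxima in that explicit picture.

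Next, I would use the hypothesis that each $K_i$ is mp-small. By Thompson's corollary (the ``thin position = bridge position'' statement), a thin presentation of $K_i$ has all maxima above all minima, with exactly $b(K_i)$ maxima. So the stacked picture of \fullref{sum} has $b(K_1)$ maxima coming from the top summand and $b(K_2)$ maxima coming from the bottom summand, except that the summing construction identifies the lowest minimum of $K_1$ with the highest maximum of $K_2$ (or equivalently removes one maximum--minimum pair in forming the connect-sum arc). Hence the total number of maxima in this thin presentation of $K_1 \# K_2$ is $b(K_1) + b(K_2) - 1$.

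Finally, by Schubert's theorem, quoted just above, $b(K_1 \# K_2) = b(K_1) + b(K_2) - 1$, so the number of maxima in thin position equals $b(K_1 \# K_2)$, as claimed.

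The only step requiring any care is the bookkeeping in the stacked picture: one must verify that exactly one maximum is ``used up'' by the connecting arc between the two summands, rather than zero or two. This is immediate from the schematic in \fullref{sum}, where the topmost portion of $K_2$ is absorbed into the summing region lying just below the bottom of $K_1$, so no other maxima or minima are created or destroyed. Once that count is confirmed, the rest is a direct application of the preceding theorem of Rieck--Sedgwick, Thompson's corollary, and Schubert's formula, with no further technical obstruction.
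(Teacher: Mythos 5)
Your proposal is correct and fills in exactly what the paper leaves implicit when it says that the corollary follows from ``counting the number of maxima in \fullref{sum}.'' You correctly combine the three ingredients the paper lines up just before the corollary: the Rieck--Sedgwick theorem (thin position of the sum is the stacked picture), Thompson's corollary (each mp-small summand in thin position has $b(K_i)$ maxima), and Schubert's formula $b(K_1\#K_2)=b(K_1)+b(K_2)-1$. One small phrasing nitpick: the connect-sum arc does not ``identify'' the lowest minimum of $K_1$ with the highest maximum of $K_2$ so much as eliminate both of them, replacing the two short arcs through them with two monotone arcs; this removes exactly one maximum (from $K_2$) and one minimum (from $K_1$), which is the count you use, so the conclusion $b(K_1)+b(K_2)-1 = b(K_1\#K_2)$ maxima stands.
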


Examples of Scharlemann and Thompson \cite{ST1} suggest that this 
is not always the case.

The proofs in Schubert \cite{S}, Schultens \cite{Sc3} and Rieck--Sedgwick
\cite{RS} do not carry over to knots in general.  To give some idea of
the complexity of the situation, we illustrate the problems with the
strategy in \cite{Sc3}.  Rather than working with decomposing spheres
and annuli, that strategy employs swallow-follow tori.  See \fullref{sf}.

\begin{figure}[ht!]
\centerline{\includegraphics[width=.3\textwidth]{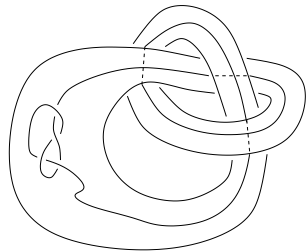}}
\caption{A swallow-follow torus}
\label{sf}
\end{figure}

Given $K_1 \# K_2$ and a decomposing sphere $S$, consider a collar
neighborhood of $(K_1 \# K_2) \cup S$ in ${\mathbb S}^3$.  Its
boundary consists of two tori.  A torus isotopic to either of these
tori is called a swallow-follow torus.  \fullref{sf} illustrates
the case in which $K_1$ is a figure eight knot and $K_2$ is a trefoil.

Swallow-follow tori often prove more effective in studying connected
sums of knots, in large part because they are closed surfaces.
The argument in \cite{Sc3} fails in settings where the swallow-follow
torus is too convoluted.  See \fullref{convoluted}.

\begin{figure}[ht!]
\centerline{\includegraphics[width=.4\textwidth]{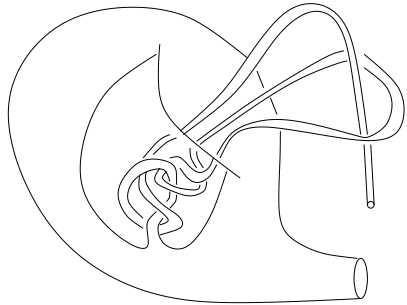}}
\caption{Convoluted portion of a swallow follow torus}
\label{convoluted}
\end{figure}

The philosophical correspondence between tunnel numbers and strongly
irreducible generalized Heegaard splittings on the one hand and bridge
position and thin position of knots deserves to be investigated more
closely.  Suffice it to say that this correspondence played a role in
the discovery of the argument yielding the inequality below.  The lack
of degeneracy for tunnel numbers of small knots is mirrored by their
lack of degeneracy of width.  Nevertheless, more generally, tunnel
numbers do degenerate under connected sum and so might their widths.
M Scharlemann and A Thompson conjecture that there are knots whose
width remains constant under connected sum with a 2--bridge knot.  See
\cite{ST1}.

Finally, a lower bound on the width of the connected sum in terms of
the widths of the summands was established by Scharlemann and Schultens:

\begin{thm}[Scharlemann--Schultens \cite{SS}] 
\label{SS}
For any two knots $K_1, K_2$, \[w(K_1 \# K_2) \geq
max(w(K_1), w(K_2))\]
\end{thm}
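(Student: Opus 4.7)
The plan is to prove $w(K_2) \leq w(K_1 \# K_2)$; the inequality for $K_1$ follows by symmetry, and these together yield $\max(w(K_1), w(K_2)) \leq w(K_1 \# K_2)$. Place $K_1 \# K_2$ in thin position realizing $w(K_1 \# K_2)$ with respect to a Morse function $h$. Choose a decomposing sphere $S$ and isotope it rel $K_1 \# K_2$ to a normalized position with respect to $h$ (for instance, minimizing the complexity of $S \cap h^{-1}(t)$ across levels). Let $B_1, B_2$ be the balls cobounded by $S$, and set $\alpha_i = (K_1 \# K_2) \cap B_i$, so that $(B_i, \alpha_i)$ is a ball-arc pair representing $K_i$; the two points of $S \cap (K_1 \# K_2)$ lie at regular values $r_p < r_q$ of $h$.

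The core step is to replace $\alpha_1$ inside $B_1$ by a trivial arc $\beta_1 \subset B_1$ with the same endpoints, chosen to be monotone with respect to $h$, so that $\beta_1$ meets each regular level in at most one point (and exactly one for $t \in (r_p, r_q)$). Since $(B_1, \beta_1)$ is then a trivial ball-arc pair, the new knot $K_2' := \alpha_2 \cup \beta_1$ is isotopic in $S^3$ to $K_2$. The essential estimate is a level-wise comparison: outside $B_1$ the two knots agree, and inside $B_1$ a parity argument (since $\alpha_1$ is an arc from height $r_p$ to height $r_q$, its intersection with a regular level is odd and hence $\geq 1$ for $t \in (r_p, r_q)$, and even and hence $\geq 0$ otherwise) gives $|\beta_1 \cap h^{-1}(t)| \leq |\alpha_1 \cap h^{-1}(t)|$ for every regular value $t$, hence
\[|K_2' \cap h^{-1}(t)| \leq |(K_1 \# K_2) \cap h^{-1}(t)|.\]

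To convert this level-wise inequality into a width inequality, note that because the monotone arc $\beta_1$ contributes no critical points, the critical values of $h|_{K_2'}$ form a subset of those of $h|_{K_1 \# K_2}$. Consequently each interval between consecutive critical values of $h|_{K_2'}$ is a union of intervals between consecutive critical values of $h|_{K_1 \# K_2}$; summing the level-wise bound over the finer partition yields $w_h(K_2') \leq w_h(K_1 \# K_2)$, so $w(K_2) \leq w_h(K_2') \leq w(K_1 \# K_2)$, as desired.

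The main technical obstacle is securing the existence of the monotone trivial arc $\beta_1$ inside $B_1$. Although $B_1$ is a topological ball, its intersection with a regular level $h^{-1}(t)$ can be disconnected, so a monotone path through $B_1$ from $p$ to $q$ need not exist a priori; moreover, $\beta_1$ must be smoothly matched to $\alpha_2$ at its endpoints. The lemma needed is that after suitably normalizing $S$ relative to $h$ (for example, by an isotopy minimizing saddle tangencies of $S$ with level sets), the ball $B_1$ admits such a $\beta_1$, produced by threading between adjacent regular levels using the controlled structure of $B_1 \cap h^{-1}(t)$ in the normalized position. This is where the argument engages the standard normal-form machinery for surfaces in $3$--manifolds relative to a height function, and is the heart of the proof.
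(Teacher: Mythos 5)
Your overall plan---replace the $K_1$-summand arc $\alpha_1 = (K_1 \# K_2)\cap B_1$ by a monotone trivial arc $\beta_1\subset B_1$, get a level-wise bound, and sum---is a natural first idea, and the bookkeeping at the end (passing from the level-wise bound and the fact that $h|_{K_2'}$ has fewer critical values to $w_h(K_2')\leq w_h(K_1\#K_2)$) is correct as written. But the step you flag as the ``heart of the proof'' is not a matter of invoking standard normal-form machinery; it is a genuine gap, and I do not believe the lemma you need is true as you have stated it.

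There are two independent problems. \textbf{(a)} A sphere $S$ that cannot be isotoped rel $K$ to a round sphere must have saddle tangencies with level spheres (for a sphere, $\#\mathrm{max}+\#\mathrm{min}-\#\mathrm{saddle}=2$). Tracking, as $t$ increases, which components of $B_1\cap h^{-1}(t)$ are reachable from $p$ by a monotone path, one sees that the reachable set can merge with a neighboring component and then die at a local maximum of $S$ before $q$ is reached; a monotone path is then trapped. Minimizing the number of saddles does not eliminate this: saddles are forced whenever $\alpha_1$ and $\alpha_2$ jointly obstruct a round $S$, and which components merge with which is dictated by the topology, not the count. \textbf{(b)} Even granting a monotone arc $\gamma\subset B_1$ from $p$ to $q$, you still need $(B_1,\gamma)$ to be a \emph{trivial} ball-arc pair so that $\alpha_2\cup\gamma$ is $K_2$. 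This is not automatic for a monotone arc: monotonicity is a property of $h|_\gamma$, while triviality of $(B_1,\gamma)$ is a topological condition on how $\gamma$ sits in $B_1$, and a monotone arc in a non-round ball can be a knotted ball-arc pair. What you really need is a monotone arc $\gamma$, disjoint from $\alpha_2$, lying in the isotopy class rel endpoints in $S^3\smallsetminus\alpha_2$ of the original trivializing arc; whether that class admits a monotone representative against the fixed $\alpha_2$ of an arbitrary thin presentation is precisely the hard content, and it is left unaddressed.

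Finally, a caveat about comparison with the source: this survey cites the theorem from \cite{SS} without reproducing a proof, so there is nothing ``in-paper'' to check you against. What the survey does make clear is that the width formula of \fullref{widthcount}---$w(K)=2\sum a^2 - 2\sum b^2$ over thick and thin levels---``was included in Scharlemann and Schultens \cite{SS},'' and the argument there is a global accounting over the thick and thin spheres of a thin presentation interacting with a normalized decomposing surface, not a level-by-level comparison of the knot against a monotone replacement arc. So even if your replacement lemma could be salvaged, you would have a genuinely different proof; as it stands, the existence (and triviality) of the monotone $\beta_1$ is an open gap in your argument.
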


\begin{cor}[Scharlemann--Schultens \cite{SS}] 
\label{SS1}
For any two knots $K_1, K_2$, \[w(K_1 \# K_2) \geq
\tfrac{1}{2}(w(K_1) + w(K_2))\]
\end{cor}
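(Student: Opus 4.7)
The plan is to derive this directly from \fullref{SS} using a trivial numerical inequality. For any pair of non-negative real numbers $a,b$ we have $\max(a,b) \geq \tfrac{1}{2}(a+b)$, because $2\max(a,b) = \max(a,b) + \max(a,b) \geq a + b$. Widths of knots are non-negative integers, so this applies to $a = w(K_1)$ and $b = w(K_2)$.

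First I would invoke \fullref{SS} to obtain the bound $w(K_1 \# K_2) \geq \max(w(K_1), w(K_2))$. Then I would chain this with the numerical inequality above, applied to the non-negative quantities $w(K_1)$ and $w(K_2)$, to conclude
\[
w(K_1 \# K_2) \;\geq\; \max(w(K_1), w(K_2)) \;\geq\; \tfrac{1}{2}(w(K_1) + w(K_2)),
\]
which is the desired conclusion.

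There is essentially no obstacle here: the entire content of the corollary is packed into \fullref{SS}, and the step from ``max'' to ``average'' is a one-line arithmetic observation. The only thing worth flagging is that one must verify that widths are non-negative (which is immediate, since $w(K) \geq 0$ by definition as a sum of cardinalities $\#|K \cap h^{-1}(r_i)|$); otherwise the inequality $\max(a,b) \geq \tfrac{1}{2}(a+b)$ would not be available.
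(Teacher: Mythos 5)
Your argument is correct and is exactly the (implicit) derivation the paper intends: \fullref{SS1} is stated immediately after \fullref{SS} with no further proof, precisely because $\max(a,b)\geq\tfrac{1}{2}(a+b)$ for non-negative $a,b$. Nothing further is needed.
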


The fact that width of 3--manifold behaves well from many points of
view has initiated reconsiderations of the notion of thin position
for knots.  One is tempted to redefine the notion of thin position
for knots so as to avoid the difficulties it engenders.
Scharlemann and Thompson have defined a notion of ``slender
knots'' which lies outside of the scope of this article.

\section{Work of Heath and Kobayashi}
\label{sec:hk}

D\,J Heath and T Kobayashi were the first to use the possible
compressibility of thin levels to advantage.  They made great
strides in understanding many issues related to thin position of
knots.  In this section we briefly summarize their results.
Details on these results may be found in three of their joint
papers \cite{HK1,HK2,HK3}.  The illustrations alone are each worth
a thousand words.  Our brief summary requires a number of
definitions. Some of these are analogous to other definitions in
this survey, but are given here in a slightly different context.

Given a link $L$ in ${\mathbb S}^3$, our height function, $h(x)$,
can be thought of as resulting from looking at ${\mathbb S}^3-2
\mbox{ points } = {\mathbb S}^2 \times {\mathbb R}$.  This restriction of
$h(x)$ to ${\mathbb S}^2 \times {\mathbb R}$ is then simply
projection onto the ${\mathbb R}$ factor. We let $p(x)$ be the
projection onto the ${\mathbb S}^2$ factor. Consider a meridional
2--sphere $S$, that is, a 2--sphere in ${\mathbb S}^3$ that
intersects $L$ in points.  (In the complement of $L$, the remnant
of $S$ has boundary consisting of meridians.)

The 2--sphere $S$ is said to be {\em bowl like} if all of the
following hold (see \fullref{FigBowlLike}):
\begin{enumerate}
\item $S = F_1 \cup F_2$ and $F_1 \cap F_2 = \partial F_1 =
\partial F_2$;
\item $F_1$ is a 2--disc contained in a level plane;
\item $h |_{F_2}$ is a Morse function with exactly one maximum or
minimum;
\item $p(F_1) = p(F_2)$;
\item $p|_{F_2}:F_2 \rightarrow p(F_2)$ is a homeomorphism;
\item all points of intersection with $L$ lie in $F_1$.
\end{enumerate}
A bowl like 2--sphere is {\em flat face up} ({\em flat face down})
if $F_1$ is above (below) $F_2$ with respect to $h$.

\begin{figure}[ht!]
\centerline{\includegraphics[width=.4\textwidth]{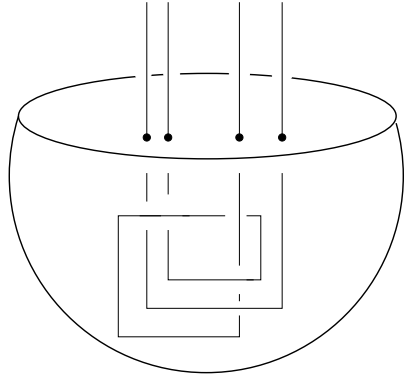}}
\caption{A bowl like 2--sphere (flat face up)}
\label{FigBowlLike}
\end{figure}

Let $F = h^{-1} (r)$, for some regular value $r$ of $h$, be a
thick 2--sphere for $L$. Let $N_0$ ($N_1$) be a thin 2--sphere lying
directly above (below) $F$. Let $D$ be a disc such that $\partial
D = \alpha \cup \beta$ with $\alpha$ a subarc of $L$ containing a
single critical point which is a maximum, and $\beta = D \cap F$.
Similarly, let $D'$ be a disc such that $\partial D' = \alpha'
\cup \beta'$ with $\alpha'$ a subarc of $L$ containing a single
critical point which is a minimum, and $\beta' = D' \cap F$.
Assume that the interiors of $\beta$ and $\beta'$ are disjoint and
that $\alpha \cup \alpha'$ is not a complete component of $L$.
Then $D$ and $D'$ are called {\em a bad pair of discs}.  They are
called {\em a strongly bad disc pair} if $D \cap N_0 = \emptyset =
D' \cap N_1$.

Let $S= S_1 \cup \dots \cup S_n$ be a collection of bowl like 2--spheres for a
link $L$ and $C_0, \dots C_n$ be the closure of the components of $S^3 - S$.
Note that each $S_i$ separates ${\mathbb S}^3$ into two sides.  The one not
containing the 2 points that have been removed from ${\mathbb S}^3$ is
considered to lie inside $S_i$. For $i = 1, \dots, n$, $C_i$ is a punctured
copy of the 3--ball lying inside $S_i$. Furthermore, $C_0$ is the component
which does not lie interior to any $S_i$, and $C_i$ $(i = 1, . . . ,m)$ is the
component lying directly inside of $S_i$.

Let $L_i = L \cap C_i$.  We define thin (thick) level disk analogously to thin
(thick) level spheres.  We say that $L_i, i \neq 0$, is in {\em bridge
position} if there exists some thick 2--disk $D_i \subset C_i$ for $L_i$ such
that all maxima (minima) of $L_i$ are above (below) $D_i$, and every flat face
down (up) bowl like 2--sphere $S_j$ contained in the ``inner boundary'' of $C_i$
(where $C_i$ meets $S_j$ for $j \neq i$) is above (below) $D_i$.  We say that
{\em $L_0$ is in bridge position} if there exists some thick 2--sphere $D_0
\subset C_0$ for $L_0$ having the analogous properties. Finally, let $L'$ be a
portion of the link $L$ lying inside the bowl like 2--sphere $S$. We also say
that {\em $L'$ is in bridge position} if there exists some thick 2--disk $D$
for $L'$ such that all maxima (minima) of $L'$ are above (below) $D$.

We wish to associate a graph with the above information.  To this
end we will suppose that, given $S$ and $C_0, \dots, C_n$ as
above, the following properties are satisfied.  If they are, the
system is said to enjoy {\em Property 1}.\\

\begin{enumerate}
\item For each $C_j$, $(j = 0, 1, . . . ,m)$, we have one of the
following:
\begin{enumerate}
\item there are both a maximum and a minimum of $L$ in $C_j$; or
\item there does not exist a critical point of $L$ in $C_j$.
\end{enumerate}
\item There exists a level 2--sphere $F_0$ in $C_0$ such that both
of the following hold:
\begin{enumerate}
\item every flat face down (up, respectively) bowl like 2--sphere
in $\partial C_0$ lies above (below, respectively) $F_0$; and
\item every maximum (minimum, respectively) of $L$ in $C_0$ (if
one exists) lies above (below, respectively) $F_0$, and it is
lower (higher, respectively) than the flat face down (up,
respectively) bowl like 2--spheres in $\partial C_0$.
\end{enumerate}

\item For each $i$, $(i = 1, . . . ,m)$, there exists a level disk
$F_i$ properly embedded in $C_i$ such that both of the following
hold:
\begin{enumerate}
\item every flat face down (up, respectively) bowl like 2--sphere
in $\partial C_i$ lies above (below, respectively) $F_i$ , and
\item every maximum (minimum, respectively) of $L$ in $C_i$ (if
one exists) lies above (below, respectively) $F_i$, and it is
lower (higher, respectively) than the face down (up, respectively)
bowl like 2--spheres in $\partial C_i - S_i$.
\end{enumerate}
\end{enumerate}

A {\em spatial graph} $G$ is a 1--complex embedded in the 3--sphere.
$G$ is a {\em signed vertex graph} if each vertex of $G$ is
labeled with either a $+$ or a $-$. The {\em width of $G$} is
defined as follows. Suppose that the vertices of $G$ labeled with
$+$ ($-$, respectively) have the same height and are higher
(lower, respectively) than any other point in $G$.  Suppose
further that $h|_{G - \{vertices\}}$ is a Morse function.  We say
that $G$ is in {\em bridge position} if each maximum in $G -
\{vertices\}$ is higher than any minimum of $G - \{vertices\}$. In
general, let $r_1, \dots , r_{n-1}$ $(r_1 < \dots < r_{n-1})$ be
regular values between the critical values in $G - \{vertices\}$.
Then define the width of $G$ to be the following $w(G) =
\Sigma_{i=1}^{n-1} | G \cap h^{-1}(r_i)|$. For a signed vertex
graph in bridge position {\em the bridge number} is $|F \cap G| /
2$ where $F$ is a level 2--sphere such that every maximum of $G$ is
above $F$ and every minimum of $G$ is below $F$. The minimum of
the bridge numbers for all possible bridge positions of $G$ is the
{\em bridge index} of $G$.

\begin{figure}[ht!]
\centerline{\includegraphics[width=.5\textwidth]{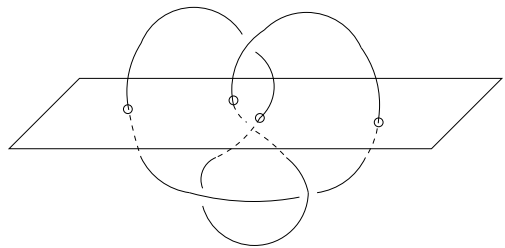}}
\caption{The Figure 8 knot in bridge position}
\label{fig8}
\end{figure}

Let $L$, $C_j$ $(j = 0, 1, . . . ,m)$ be as above. We can obtain a
signed vertex graph $G_j$ from ($C_j ,L \cap C_j$) as follows: In
the case that $j = 0$, shrink each component of $\partial C_0$ to
a vertex.  Then pull up (down, respectively) the vertices obtained
from flat face down (flat face up, respectively) 2--spheres so that
they lie in the same level. We obtain the signed vertex graph
$G_0$ by assigning $+$ to the former and $-$ to the latter. By (2) of
Property 1, we see that $G_0$ is in a bridge position.

Suppose that $j \neq 0$. In this case we may deform $C_j$ by an
ambient isotopy, $f_t$, of ${\mathbb S}^3$ which does not alter
the flat face of $S_j$ so that $f_1(C_j)$ appears to be of type
$C_0$.  This isotopy moves infinity ``into'' $C_j$.  For details see
for instance the ``Popover Lemma'' in \cite{Sc3}. Then we apply the
above argument to $(f_1(C_j), f_1(L \cap C_j))$ and obtain a
signed vertex graph $G_j$ in bridge position. We say that $G_j$
$(j = 0, 1, . . . ,m)$ is a {\em signed vertex graph associated to
S}.  In this process we reversed $S_j$ and made no other changes; thus the
resulting signed graph is the same as the signed graph in the case $j=0$ but
the sign of the vertex corresponding to $S_j$ is reversed.

Let $L, C_j$ $(j = 0, 1, . . . ,m)$ be as above. Then we can take
a convex 3--ball $R_j$ in the interior of $C_j$ such that each
component of $(L \cap C_j) - R_j$ is a monotonic arc connecting
$R_j$ and a component of $\partial C_j$, and such that
\begin{enumerate}
\item $R_0$ lies below (above, respectively) the flat face down (up,
respectively) bowl like 2--spheres in $\partial C_0$;
\item $R_i$ $(i = 1, . . . ,m)$ lies below (above respectively) the
flat face down (up respectively) bowl like 2--spheres in $\partial C_i
- S_i$.
\end{enumerate}
We call $R_j$ a {\em cocoon} of $L$ associated to $S$.

\subsection{A search method for thin position of links}

Let $L$ be a link of bridge index $n$ and suppose that there is a
list of all those meridional, essential, mutually non parallel
planar surfaces in the exterior of $L$, that have at most $2n-2$
boundary components.  Let $S = \bigcup_{i=1}^m S_i$ be a union of
2--spheres in $S^3$ as above. Then we can obtain a number of
systems of signed vertex graphs as follows: For each $i,$ $(i = 1,
. . . ,m)$, we assign $+$ to one side of $S_i$ and $-$ to the other.
Note that there are $2^m$ ways to make such assignments. Let
$C_0,C_1, \dots ,C_m$ be as above. Then for each $j,$ $(j = 0, 1,
. . . ,m)$ the collar of each component of $\partial C_j$ is
assigned either a $+$ or a $-$. By regarding each component of
$\partial C_j$ as a very tiny 2--sphere, we obtain a signed vertex
graph, say $G_j$, from $L \cap C_j$.

Now we assume, additionally, that we know the bridge indices of
all the signed vertex graphs obtained in this manner. Then, for
each system of signed vertex graphs, we take minimal bridge
presentations, say $G_0,G_1, . . . ,G_m$, of the signed vertex
graphs. We expand the vertices of $G_0,G_1, . . . ,G_m$ to make +
vertices (- vertices, respectively) flat face down (up,
respectively) bowl like 2--spheres. Then we combine the pieces,
applying the inverse of deformations  to obtain a position of $L$,
say $L'$, and a union of bowl like 2--spheres $S'$ with respect to
which $L'$ satisfies Property 1 above. Let $R_0,R_1, . . . ,R_m$
be the cocoons of $L'$ associated to $S'$. Then consider all
possible orders on $\{ R_0,R_1, . . . ,R_m \}$ which are
compatible with relative positions in $L'$. All such orders are
realized as a position of L.

\begin{thm}[Heath--Kobayashi {{\cite[Theorem~2]{HK3}}}]
There is a thin position of $L$ that is realized
through the process described above.
\end{thm}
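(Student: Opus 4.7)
The plan is to start from an arbitrary thin presentation of $L$ and show that its combinatorial structure (thin levels, pieces between them, signed vertex graphs, and cocoon ordering) matches exactly the data that the search procedure considers, so that this thin presentation appears among the candidates the procedure enumerates.

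First, I would analyze the thin levels. Using the properties of thin position discussed earlier---in particular the absence of disjoint upper and lower disks at a thin regular value---each thin level is an essential meridional planar surface in the exterior of $L$. Moreover, since $L$ has bridge index $n$, every level meets $L$ in at most $2n$ points (otherwise one could exchange a piece for a bridge presentation of smaller width), so each thin surface has at most $2n-2$ boundary components. Hence every thin level is isotopic to one of the surfaces on the finite enumerated list hypothesized by the procedure. Using a small ambient isotopy supported near each such surface, I would then flatten each thin level into a bowl-like 2--sphere $S_i$, recording a sign ($+$ or $-$) according to whether the flat face is oriented up or down, which in turn is determined by which side of the sphere the nearby critical points of $L$ lie on.

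Next, I would examine the components $C_0,\dots,C_m$ of $\mathbb{S}^3$ cut along $S = \bigcup S_i$. The standard ``thick between thin'' feature of thin position implies that each $C_j$ contains exactly one thick level, and the analog of Property 1 then holds automatically: the critical points of $L \cap C_j$ appear in paired maxima/minima (or not at all) and sit in the correct positions relative to the flat faces on $\partial C_j$. Collapsing each boundary component of $C_j$ to a signed vertex yields a signed vertex graph $G_j$ in bridge position, with the thick level serving as the bridge sphere. After performing the ``popover'' isotopy used in the definition of $G_j$ for $j \ne 0$, the data $(S, G_0, \dots, G_m)$ is precisely of the form assembled in the procedure, and the order in which the cocoons $R_0, \dots, R_m$ stack along the height function is by construction compatible with the relative positions in $L$.

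The main obstacle I expect is verifying that each $G_j$ is actually in a \emph{minimum} bridge presentation of its signed vertex graph; without this, the presentation we recovered may not correspond to one that the procedure would select. The argument is a local exchange: if some $G_j$ admitted a bridge presentation with strictly fewer maxima, we could substitute this presentation into $C_j$, keeping the bowl-like spheres of $\partial C_j$ and the adjacent cocoons fixed, and then reassemble. This substitution must be shown to strictly decrease the width of $L$ in the dictionary order, contradicting thinness. The delicate point is that modifying one piece can alter the height profile of neighboring pieces through the shared bowl-like spheres; one handles this by using the flat faces as a rigid scaffold and arguing, in the spirit of the Scharlemann--Thompson analysis of thin position for 3--manifolds, that only the widths of the altered thick levels change, and that each such change is a strict decrease.

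Once minimality of each $G_j$ is established, the thin presentation of $L$ is exhibited as an output of the procedure: the enumerated list of essential meridional planar surfaces supplies the bowl-like 2--spheres $S_i$, the assumed knowledge of bridge indices provides the minimum bridge presentations $G_0,\dots, G_m$, the expansion of signed vertices back into flat faces reconstructs the geometric assembly, and the cocoon ordering induced by the height function is one of the compatible orders considered in the last step. Therefore the procedure produces a position of $L$ that agrees with our thin presentation, proving the theorem.
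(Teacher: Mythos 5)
The paper is a survey and does not itself supply a proof of this theorem; it merely records the statement from Heath--Kobayashi \cite{HK3}, together with the supporting machinery from \cite{HK1} (Proposition 3.7 and Theorem 4.3 as quoted in the preceding subsection). So I will assess your proposal on its own merits rather than against a proof in this text.

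The central gap is your first step. You assert that, by the absence of disjoint upper and lower disks, ``each thin level is an essential meridional planar surface in the exterior of $L$.'' This is false in general, and the paper itself emphasizes the point repeatedly: for knots in thin position, thin levels need \emph{not} be incompressible. That is exactly why Wu proves only that the \emph{thinnest} thin level is incompressible, and why Tomova's analysis of compressing disks for thin levels is nontrivial. You appear to have imported the Scharlemann--Thompson fact that thin levels of a 3--manifold in thin position are incompressible into the knot setting, where it fails. The absence of disjoint upper/lower disks constrains arcs of $K$ near a regular level, but it does not preclude a compressing disk for the level sphere that dips above or below it. Without incompressibility you cannot flatten the thin levels into bowl-like 2--spheres that appear on the enumerated list of essential meridional planar surfaces, and the whole reduction collapses.

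Overcoming precisely this difficulty is the substantive content of Heath and Kobayashi's work: the result you are trying to prove depends on first using an ambient isotopy (their Proposition 3.7 and Theorem 4.3 from \cite{HK1}) to pass from an arbitrary thin presentation to a nearby position of $L$ admitting a tangle decomposition by \emph{incompressible} bowl-like 2--spheres, with the correspondence of maxima/minima between components preserved. Your proof would need to invoke or reprove that machinery; as written it assumes the conclusion of that machinery for free.

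A secondary concern is the minimality-of-$G_j$ exchange argument. You correctly flag that modifying one piece can affect neighboring pieces through the shared bowl-like spheres, but your resolution---treating the flat faces as a rigid scaffold and appealing to Scharlemann--Thompson---does not transfer cleanly, because for knots the width is a \emph{sum} over all regular levels rather than a lexicographically ordered tuple of thick-level complexities. A substitution inside one $C_j$ changes several summands at once, and one must account for them via something like the thick/thin cancellation identity $w(K) = 2\sum a_{i_l}^2 - 2\sum b_{j_l}^2$. This is repairable, but it needs an actual computation rather than an analogy.
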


\subsection{Essential tangle decomposition from thin position of a link}

We say that two links $L$ and $L'$ in ${\mathbb S}^3$ are {\em
h-equivalent} if there exists an ambient isotopy, $f_t$, such
that $f_0(L) = L$, $f_1(L) = L'$ and such that for every $x \in L$
we have $h(f_1(x)) = h(x)$.

\begin{prop}[Heath--Kobayashi {{\cite[Proposition~3.7]{HK1}}}] 
If a link $L$ has the property that thin position
differs from bridge position, then there exists an ambient isotopy
$f_s$, such that $L' = f_1(L)$ is h-equivalent to $L$ and $L'$ has
a tangle decomposition by a finite number of non-trivial,
non-nested, flat face up, bowl like 2--spheres, each of which is
incompressible in the link complement.  In this decomposition we
have a tangle ``on top'' (above $P$) with all of the
incompressible 2--spheres below it connected by vertical strands.
\end{prop}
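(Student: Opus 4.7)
The plan is to start with $L$ in thin position and exploit the thin level guaranteed by the hypothesis. Since thin position differs from bridge position, $L$ in thin position has at least one thin 2-sphere $P_0 = h^{-1}(r)$ meeting $L$ transversely in punctures, all at height $r$. The overall strategy is to compress $P_0$ in the link exterior, retain the outermost non-trivial incompressible meridional 2-spheres among the compressed pieces, and then reshape each of them into a flat-face-up bowl-like 2-sphere by an h-equivalent ambient isotopy.

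First I would compress $P_0$ as a planar surface in the link exterior. Any compressing disk $D$ lies off $L$; surgery along $D$ either yields a 2-sphere disjoint from $L$ bounding a 3-ball (which is discarded as trivial) or splits $P_0$ into two meridional planar surfaces of smaller complexity. Because every compressing disk is disjoint from $L$, the puncture set $P_0 \cap L$ is never disturbed and remains at height $r$ throughout. Iterating yields a finite collection $\mathcal{P}$ of meridional 2-spheres, each incompressible in the exterior of $L$. Retaining only the outermost members that bound a non-trivial sub-tangle on their finite side produces $F_1, \dots, F_m$: pairwise non-nested, incompressible, and non-trivial, each with all its $L$-punctures at height $r$.

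Next, for each $F_i$ I would construct an h-equivalent ambient isotopy that deforms $F_i$ into bowl-like form. All punctures $F_i \cap L$ lie in the single level 2-sphere $h^{-1}(r)$, so they can be slid within that sphere into a small level disk $D_i \subset h^{-1}(r)$ without moving any link point off height $r$. The remainder of $F_i$, lying in the link exterior, can then be reshaped by an isotopy supported away from $L$ into a Morse disk with one minimum meeting $D_i$ along $\partial D_i$. After this composite isotopy, each $F_i$ is a flat-face-up bowl-like 2-sphere, and $L' = f_1(L)$ is h-equivalent to $L$ by construction. Incompressibility and non-nestedness survive the ambient isotopy; the complement of $\bigcup F_i$ in $S^3$ contains one unbounded region above that holds the \emph{top tangle}, and the arcs of $L$ connecting the top tangle into each bowl can then be straightened into monotone vertical strands within vertical tubes of levels over the complement of $\bigcup D_i$, again by an h-equivalent isotopy.

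The principal obstacle is splicing the local stages of the h-equivalent isotopy together globally---simultaneously for all $F_i$---without interference, and while ensuring that the subsequent straightening of connecting arcs does not recreate a compressible sphere or introduce a new nested piece. The geometric reason this is possible is precisely the outermost-and-incompressible selection: each $F_i$ separates off a cocooned sub-tangle which, after horizontal consolidation of its punctures inside the single level sphere $h^{-1}(r)$, has enough vertical room below to be squeezed into single-minimum Morse form by an isotopy supported entirely in the link exterior, causing no motion of $L$ whatsoever and hence preserving h-equivalence automatically.
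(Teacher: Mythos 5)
Your high-level plan (start from a thin level, compress it in the exterior, keep the outermost non-trivial incompressible meridional spheres, then reshape them into bowls) is in the right spirit, but several steps are asserted rather than derived, and at least one is likely to fail as stated.

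The central gap is directional. When you compress $P_0 = h^{-1}(r)$, compressing disks may lie above as well as below $P_0$, so the resulting incompressible spheres, and the sub-tangles they cut off, will generally straddle the level $r$. A flat-face-up bowl with flat disk at level $r$ must contain its entire sub-tangle strictly below level $r$. If the sub-tangle inside some $F_i$ has critical points of $L$ above level $r$, you cannot push them below $r$ without changing their $h$-values, which violates the $h$-equivalence requirement. So the claim that each $F_i$ can be ``squeezed into single-minimum Morse form'' rests on a property of the chosen spheres (that their interior tangles already lie at the right heights, or can be made to do so without vertical motion of $L$) that you never establish. This is exactly where one has to exploit the thin-position hypothesis seriously---the alternating pattern of thick and thin levels, the local bridge structure between them, and the fact that a thin level has a maximum of $L$ immediately below and a minimum immediately above. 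Your argument uses thin position only to obtain a single thin sphere, which is not enough. Relatedly, the final ``straightening into monotone vertical strands'' step changes heights unless the connecting arcs are already monotone; you need a cocoon-type statement (each arc of $L$ outside the cocoons is monotone) before such a straightening can be $h$-equivalent.

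There is also an internal inconsistency: in the second paragraph you move $L$ (``slide punctures within that sphere''), while in the last paragraph you claim the reshaping causes ``no motion of $L$ whatsoever.'' You cannot have both. More substantively, the claim that the portion of $F_i$ away from the punctures can be isotoped into bowl form by an isotopy supported in the link exterior (fixing $L$) is not justified; a meridional planar surface in a link exterior is not generically isotopic to a bowl rel boundary, and incompressibility alone does not give this. Finally, the selection ``retain only the outermost members that bound a non-trivial sub-tangle'' needs to be squared with non-nestedness and with the ``tangle on top'' conclusion; you should justify that after this selection the complementary region containing $\infty$ is exactly the claimed top tangle and that the remaining strands are the vertical ones.
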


\begin{thm}[Heath--Kobayashi {{\cite[Theorem~4.3]{HK1}}}]
Let $L$ be a link in thin position, and $S$
as above. Then there exists an ambient isotopy for $L$ to a link
$L'$ so that there exists a collection of incompressible bowl like
2--spheres $S'$ for $L'$ such that there is a one to one
correspondence between the components of ${\mathbb S}^3 - S'$ that
contain maximum (and minimum) of $L'$ and the components of
${\mathbb S}^3 - S$ that contain maximum (and minimum) of $L$.
\end{thm}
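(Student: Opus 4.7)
My plan is to obtain $S'$ from $S$ by iteratively compressing any bowl-like sphere in $S$ that is compressible in the link complement, while simultaneously adjusting the link by an ambient isotopy so that the result is again a collection of bowl-like 2--spheres. Concretely, suppose $S_i \in S$ admits a compressing disk $D$ with $\partial D$ an essential curve on the punctured sphere $S_i \setminus L$. Decompose $S_i = F_1 \cup F_2$ as in the bowl-like definition, so $F_1$ is the flat disk carrying all of $L \cap S_i$ and $F_2$ is the Morse cap with a single critical point. First, I would push $D$ so that all intersections of $D$ with the other spheres of $S$ are inessential, removing them by standard innermost-disk arguments, and then use $D$ to replace $S_i$ by two new 2--spheres $S_i^1, S_i^2$. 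An ambient isotopy supported near $D \cup S_i$, analogous to the ``popover'' deformation discussed in the Heath--Kobayashi setup, can be used to reposition each $S_i^j$ as a bowl-like 2--sphere, at the cost of carrying $L$ along to a new embedding $L'$.

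The decisive structural claim is that one of the two resulting spheres, say $S_i^2$, cobounds with a piece of the former bowl-like sphere $S_i$ a ball $B$ that contains no critical points of $L'$. This follows because $\partial D$ is essential in $S_i \setminus L$, so one side of the compression separates off a region meeting $L'$ only in unknotted vertical strands (guaranteed by the bowl-like structure together with the cocoon picture around $S_i$); inside such a region the link is monotonic and bears no maximum or minimum. Consequently, replacing $S_i$ by $\{S_i^1, S_i^2\}$ subdivides exactly one component of $\mathbb{S}^3 \setminus S$ into two, precisely one of which inherits the maxima and minima that lay in the original component. This gives a canonical bijection between critical-point-containing components before and after the compression, and iterating preserves the bijection.

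To terminate the iteration I would use a standard complexity, for instance the lexicographic pair consisting of the number of compressible bowl-like spheres in the current collection together with the total number of $L$--intersections on those spheres; each compression strictly decreases this complexity (after the innermost-disk cleanup). When no compressible bowl-like sphere remains, the resulting collection $S'$ and link $L'$ satisfy the conclusion, with the one-to-one correspondence obtained by composing the bijections produced at each step.

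The hard part, I expect, is the step that realizes a compression as a genuine bowl-like 2--sphere replacement via ambient isotopy of $L$ rather than merely as an abstract surface operation. One must guarantee that the isotopy used to flatten each new $S_i^j$ into bowl-like form (and to restore the flat-face/monotone-cap decomposition) does not destroy the bowl-like structure of the remaining spheres in $S \setminus \{S_i\}$, and does not create new compressing disks elsewhere. Handling this requires controlling the isotopies in mutually disjoint neighborhoods of the cocoons $R_j$ and invoking the non-nested property of the spheres from Proposition~3.7, so that the isotopies commute up to a final global readjustment. Once this technical bookkeeping is in place, the critical-point-preserving bijection assembled inductively yields exactly the claimed correspondence.
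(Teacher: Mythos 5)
The survey paper states this result as a cited theorem from Heath--Kobayashi \cite{HK1} and supplies no proof, so there is no in-paper argument to compare against; what follows is an assessment of your proposal on its own terms.

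Your ``decisive structural claim''---that after compressing $S_i$ along $D$, one of the two resulting spheres cobounds a ball meeting $L'$ only in monotone strands and hence carries no critical points---is asserted, not established, and it is the crux of the whole argument. The bowl-like condition constrains $S_i$ itself, and the cocoon $R_i$ guarantees monotone arcs only \emph{outside} $R_i$; but the compressing disk $D$ is merely some disk in the link complement with $\partial D$ essential in $S_i \setminus L$, and nothing forces $D$ to be isotopable off the cocoon. If $D$ passes through $R_i$, both sides of the compression can inherit critical points, and the promised one-to-one correspondence between critical-point-bearing components collapses. The innermost-disk cleanup you invoke tames $D \cap S$, but says nothing about $D \cap R_i$. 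The termination argument is also shaky: a compression replaces one sphere by two, the total number of $L$-punctures on the collection is unchanged (since $D$ misses $L$), and both new spheres may remain compressible or new compressions may be created by the flattening isotopy, so your lexicographic complexity is not obviously monotone. Finally, you never actually use the hypothesis that $L$ is in thin position---neither the absence of (strongly) bad disc pairs nor width minimality enters the argument---which is a warning sign, since the statement certainly fails without it. The real content of the Heath--Kobayashi proof must live exactly in these two places: in how thin position forces the compressions to behave as you hope, and in how one terminates the process.
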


\subsection{Locally thin position for a link}

Perhaps the greatest weakness of thin position, as with many knot
invariants that are defined in terms of a global minimum, is that it is 
hard to determine. On the other hand, many of its applications
rely only on local properties of thin position.  In order to
address this issue, Heath and Kobayashi define a local version of
thin position.

\begin{defn}
A link $L$ is said to be in local thin position if it satisfies
the following
two properties with respect to the height function $h$:
\begin{enumerate}
\item no thick 2--sphere for $L$ has a strongly bad pair of discs, and
\item There exists a decomposition of $L$ with bowl like 2--spheres
$S_1, \dots, S_n$ such that each $S_i$ is incompressible and
$\partial$--incompressible, and so that $L$ is in bridge position
in the complement of $\cup S_i$.
\end{enumerate}
\end{defn}

They then prove the following main result and two corollaries:

\begin{thm}[Heath--Kobayashi {{\cite[Main Theorem~3.1]{HK2}}}]
Every non-splittable link has a locally thin presentation.
\end{thm}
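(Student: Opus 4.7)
The plan is to produce a locally thin presentation as a refinement of an ordinary thin presentation, leaning heavily on the earlier results of Heath--Kobayashi quoted above. I would start with any thin presentation of the non-splittable link $L$. By Theorem~4.3 of \cite{HK1}, after ambient isotopy we may assume $L$ comes equipped with a collection $S = S_1 \cup \dots \cup S_n$ of incompressible bowl-like 2--spheres in bijection with the max/min components of the thin presentation. Inside each complementary piece $C_j$ the link $L \cap C_j$ is already in bridge position (this is essentially Property~1 together with the cocoon construction). So condition (2) of local thin position is essentially handed to us by the earlier machinery; what remains is to secure condition (1): no thick 2--sphere admits a strongly bad pair of discs.

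Next I would run an inductive reduction. Suppose some thick 2--sphere $F \subset C_j$ admits a strongly bad pair $(D, D')$. By definition $D$ is disjoint from the thin sphere $N_0$ directly above $F$, $D'$ is disjoint from the thin sphere $N_1$ directly below $F$, and the arcs $\partial D \cap F$, $\partial D' \cap F$ have disjoint interiors. This is precisely the ``disjoint upper and lower disks at a regular level'' configuration highlighted in the section on key features of thin position, so the standard Gabai-style isotopy, performed inside $C_j$, reduces the width of $L \cap C_j$ by at least $4$. After the isotopy, I would reapply Theorem~4.3 of \cite{HK1} to the piece $C_j$ alone to recover a fresh collection of incompressible bowl-like 2--spheres subdividing it; non-splittability of $L$ is used here to guarantee that no bowl-like sphere we introduce bounds a ball disjoint from $L$, which would otherwise constitute a splitting 2--sphere.

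For termination I would use the lexicographic complexity given by the widths $\{w(L \cap C_j)\}$ of the pieces, arranged in non-increasing order, in the well-order on $\mathbb{N}^\infty$ already used for thin position of 3--manifolds in Section~\ref{tp3m}. Each step either strictly reduces the width of some piece via the bad-pair isotopy, or refines the decomposition by cutting a piece into strictly smaller-width sub-pieces along a new incompressible bowl-like 2--sphere. Either way the complexity strictly decreases in a well-ordered set, so the process halts. When it halts, condition (1) is achieved (no strongly bad pair survives anywhere) while condition (2) is maintained at each step by the repeated application of Theorem~4.3.

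The hard part is controlling the interaction between conditions (1) and (2) across steps. The bad-pair isotopy can a priori destroy the current bowl-like decomposition or produce new thin 2--spheres that are merely meridionally planar, not incompressible. The delicate point is to show that these emergent thin surfaces can always be compressed (or discarded as boundary-parallel annuli) to a fresh family of incompressible, $\partial$--incompressible bowl-like 2--spheres whose associated complexity is strictly smaller than before, and that compression never produces a splitting 2--sphere, which is where non-splittability of $L$ is essential. Making this bookkeeping precise—so that every simplification step feeds back consistently into the lexicographic complexity—is the main technical obstacle, and is the place where the proof in \cite{HK2} does most of its work.
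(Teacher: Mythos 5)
This survey never proves the theorem: it simply quotes the statement and points the reader to \cite{HK2}, so there is no in-paper argument to check your sketch against. What follows is therefore an assessment of the sketch on its own terms.

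The central difficulty is structural. You begin by placing $L$ in (global) thin position and then propose an inductive reduction that fires whenever some thick sphere carries a strongly bad pair of discs. But a strongly bad pair is, by the definition you quote, a pair of strict upper and lower discs confined between the adjacent thin spheres $N_0$ and $N_1$ whose arcs $\beta,\beta'$ on the thick level have disjoint interiors; exactly as in Section~4 of this paper, such a pair yields a Gabai-type isotopy that strictly reduces width. Consequently a link in global thin position can have no strongly bad pair at all --- condition~(1) of local thinness holds automatically --- and your inductive reduction never runs. The actual content of the Heath--Kobayashi theorem is condition~(2), which you dismiss as ``essentially handed to us.'' It is not: Theorem~4.3 of \cite{HK1}, as quoted here, produces bowl-like spheres that are \emph{incompressible} and in bijection with max/min components, but condition~(2) additionally demands that each sphere be $\partial$--\emph{incompressible} and that $L$ be in \emph{bridge position} in the complement of their union; neither follows from what you cite, and establishing them is where the real work lies. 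There is also a circularity in the iteration as written: the bad-pair isotopy inside one piece $C_j$ need not leave the ambient presentation of $L$ in thin position, yet you then reinvoke Theorem~4.3 of \cite{HK1}, which is a statement about links already in thin position. Finally, the termination measure --- ``widths of the pieces $L\cap C_j$ in nonincreasing order'' --- is never defined (width is defined for knots and links in $\mathbb{S}^3$, not for tangles in balls), and it is not argued that refining the sphere system decreases it. You have correctly identified that non-splittability must enter to rule out splitting spheres and that the proof is an induction over some complexity, but the sketch reduces the wrong condition, leans on unproved $\partial$--incompressibility and bridge-position claims, and leaves the descent quantity undefined.
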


\begin{cor}[Heath--Kobayashi {{\cite[Corollary~3.4]{HK2}}}]
Any locally thin position of the unknot is trivial.
\end{cor}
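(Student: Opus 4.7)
The plan is to contradict any locally thin presentation of the unknot $U$ that is not the one-bridge presentation, by handling the two conditions in the definition in turn. First I would show that the bowl-like decomposition in condition~(2) must be empty, reducing the setting to the statement that $U$ is in bridge position in all of $\mathbb{S}^3$; then I would invoke condition~(1) to force the bridge number of that presentation to be one.

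For the first step, I would use that the exterior $E(U)$ is a solid torus and that $U$ bounds a disk $\Delta \subset \mathbb{S}^3$. Given a hypothetical bowl-like sphere $S_i$ meeting $U$ transversely, the surface $P_i := S_i \cap E(U)$ is planar with meridional boundary. Making $\Delta$ transverse to $S_i$ and running a standard innermost-disk / outermost-arc clean-up on $\Delta \cap S_i$ produces, at each step where a reduction fails, either a compressing disk or a $\partial$-compressing disk for $P_i$ in $E(U)$; this contradicts the essentiality hypothesis on $S_i$. Since the arcs of $\Delta \cap S_i$ are forced to be non-empty by $\partial \Delta \cap S_i = U \cap S_i$, some reduction must indeed fail, giving the desired contradiction. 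Hence $n = 0$ and $U$ is in bridge position in $\mathbb{S}^3$.

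For the second step, I would assume $U$ is in bridge position with $k \geq 2$ bridges, let $F$ be the unique thick $2$-sphere (lying between all minima and all maxima), and extract a strongly bad pair of discs on $F$. Using $\Delta$ again, now transverse to $F$ with circles of $\Delta \cap F$ removed, the pieces of $\Delta$ above and below $F$ are planar, and outermost arcs on opposite sides of $F$ yield an upper bridge disk $D$ capping one maximum of $U$ and a lower bridge disk $D'$ capping one minimum. Because $D$ lies strictly above $F$ and $D'$ strictly below, $D\cap D' \subset F$; a careful choice of the outermost arcs makes $\beta = D \cap F$ and $\beta' = D' \cap F$ have disjoint interiors, and the condition $\alpha \cup \alpha' \subsetneq U$ is immediate from $k \geq 2$. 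The pair is \emph{strongly} bad because in bridge position the only candidates for the thin $2$-spheres $N_0, N_1$ directly above and below $F$ are the level spheres past the top maximum and bottom minimum of $U$, which are automatically disjoint from $D$ and $D'$. This contradicts condition~(1), forcing $k = 1$.

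The main obstacle is the simultaneous extraction of disjoint upper and lower bridge discs at $F$: for an arbitrary knot in non-minimal bridge position such a disjoint pair need not exist — their non-existence is exactly the content of Gabai's classical thinning move — so the argument genuinely uses that $U$ is the unknot via the spanning disk $\Delta$. Organising the innermost/outermost reduction on $\Delta$ so as to yield outermost arcs on both sides of $F$ whose $\beta$- and $\beta'$-images in $F$ have disjoint interiors is where the technical care is concentrated; once that is achieved, the upgrade from bad to strongly bad is essentially free thanks to the absence of intervening thin levels in bridge position.
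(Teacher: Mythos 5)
The survey you are reading does not prove this corollary; it merely states it and cites Heath--Kobayashi \cite[Corollary~3.4]{HK2}, so there is no in-paper argument to compare against. Judging the proposal on its own merits: your first step is sound. The exterior of the unknot is a solid torus, a bowl-like sphere would yield a properly embedded planar surface $P_i$ with boundary a family of longitudes of that solid torus, and the only incompressible, $\partial$-incompressible surface in a solid torus is a meridian disk, which has the wrong boundary slope and only one boundary component. Hence no essential bowl-like sphere exists and the decomposition in condition~(2) of local thin position must be empty, placing $U$ in bridge position in all of $\mathbb{S}^3$.

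Your second step has a real gap precisely where you flag ``technical care''. Cut $\Delta$ along the arcs of $\Delta\cap F$ (circles having been removed) and form the dual tree $T$: vertices are the complementary regions, edges are the arcs, and vertices are properly $2$-coloured by which side of $F$ they lie on. Your outermost disks are exactly the leaves of $T$, and nothing guarantees leaves of both colours. For $k=2$ the tree is forced to be a path on three vertices, whose two leaves are necessarily the same colour; more generally $T$ can be a star $K_{1,k}$ with all leaves on one side. In those cases $\Delta$ produces outermost bridge disks on only one side of $F$, and your recipe gives no lower disk at all. One must manufacture the missing disk separately: take any arc $\gamma$ of $U$ below $F$ (it has a single minimum), note that the arcs of $\Delta\cap F$ are pairwise disjoint in $F$ so their complement in $F$ is a single open disk, join $\partial\gamma$ by an arc $\beta'\subset F$ whose interior misses every arc of $\Delta\cap F$, and let $D'$ be the disk traced by the monotone sweep of $\gamma$ up to $\beta'$; then $D'\cap F=\beta'$, and $(D,D')$ is a bad pair (the requirement that $\alpha\cup\alpha'$ not be a whole component is automatic for $k\ge2$). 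As you correctly observe, in bridge position there are no intervening thin spheres, so bad is automatically strongly bad, giving the contradiction. Alternatively one can invoke Otal's theorem that every bridge presentation of the unknot with $k\ge 2$ maxima is perturbed. Either way, the argument as you wrote it does not close without one of these additional ingredients.
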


\begin{cor}[Heath--Kobayashi {{\cite[Corollary~3.5]{HK2}}}]
Any locally thin position of a 2--bridge knot is in 2--bridge
position.
\end{cor}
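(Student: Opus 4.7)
The plan is to combine the mp-smallness of 2-bridge knots with the uniqueness of their minimal bridge surface to force the locally thin presentation to have exactly two bridges. First, recall that 2-bridge knots are small by Hatcher--Thurston and hence m-small by \cite[Lemma~2.0.3]{CGLS} cited earlier, so they are mp-small. Consequently any incompressible, $\partial$-incompressible planar meridional surface in the complement of a 2-bridge knot is inessential; since its boundary consists of meridians, it must in fact be a boundary-parallel annulus.

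Given $L$ in locally thin position with bowl-like decomposition $S_1,\ldots,S_n$, condition~(2) forces each $S_i$ to be incompressible and $\partial$-incompressible in the knot exterior, so by the previous paragraph each $S_i$ meets $L$ in exactly two points and cobounds with an arc of $L$ a trivial one-string tangle. Such an $S_i$ can be pushed along this arc and absorbed into $\partial N(L)$, reducing the number of bowl-like spheres while preserving local thinness. Iterating, we may assume the decomposition is empty, so $L$ is in bridge position in $\mathbb{S}^3$ with some $k$ maxima all lying above $k$ minima.

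Finally we show that $k=2$. Since the bridge number of $L$ equals $2$, we have $k\geq 2$; suppose for contradiction that $k>2$. In pure bridge position there is a single thick $2$-sphere (the bridge sphere $F$) and the thin surfaces above and below it are empty, so the disjointness conditions $D\cap N_0 = \emptyset = D'\cap N_1$ are vacuous and every bad pair of discs on $F$ is automatically strongly bad. By the classification of minimal bridge surfaces of 2-bridge knots (Otal--Schubert), any $k$-bridge presentation with $k>2$ is obtained from the essentially unique $2$-bridge presentation by perturbations; each perturbation produces an adjacent cancelling maximum--minimum pair, whose small half-discs above and below $F$ form a bad pair of discs in the sense defined (the boundary arcs on $F$ are disjoint, and $\alpha\cup\alpha'$ is a proper subarc of $L$, not all of $L$). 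This contradicts condition~(1), so $k=2$ and $L$ is in 2-bridge position.

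The most delicate step is the trimming of the boundary-parallel $S_i$ in the second paragraph: one must ensure that each isotopy and removal preserves bridge position in the complement of the remaining spheres and does not create a new strongly bad disc pair on a surviving thick sphere. This should follow the cocoon-based isotopy machinery developed earlier in \cite{HK2}. A secondary point is the appeal to uniqueness of the $2$-bridge presentation; one can bypass this by a direct innermost-arc/outermost-disc argument on the $2k$-punctured bridge sphere, using triviality of the upper and lower tangles to produce a cancellable pair of critical points directly.
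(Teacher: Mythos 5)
This survey does not prove the corollary; it is quoted from Heath and Kobayashi \cite{HK2}, so there is no proof in the present paper against which to check you. Evaluating the proposal on its own terms: the overall skeleton (mp-smallness of $2$--bridge knots kills the bowl-like decomposition, and then a destabilization argument pins the bridge number at~$2$) is the natural one, but your second paragraph contains a genuine gap. You argue that each $S_i$ is a boundary-parallel meridional annulus and then propose to ``absorb'' each one into $\partial N(L)$, conceding that this trimming is delicate. In fact no absorption is needed, and it is unclear that it can be carried out as stated, because such $S_i$ cannot occur at all: a boundary-parallel annulus in a knot exterior is automatically $\partial$--compressible (the product region supplies a $\partial$--compressing disc across a spanning arc), whereas condition~(2) of the definition of locally thin position requires each $S_i$ to be $\partial$--incompressible. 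Combined with mp-smallness, this forces $n=0$ outright, so $L$ is simply in bridge position in $\mathbb{S}^3$. Your version replaces this one-line observation with an unverified iterated isotopy that must preserve both local-thinness conditions at every stage---exactly the sort of claim that needs a lemma, not a sentence. (This reading is consistent with the survey's quotation of \cite[Proposition~3.7]{HK1}, where the decomposing bowl-like spheres are explicitly required to be \emph{non-trivial}.)

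Your third paragraph is essentially sound once $n=0$ is in hand: the bridge sphere is the only thick sphere, there are no thin spheres so ``bad'' and ``strongly bad'' coincide, and Otal's theorem (every $k$--bridge position of a $2$--bridge knot with $k>2$ is perturbed) produces the forbidden disc pair, with $\alpha\cup\alpha'$ a proper subarc since $k>2$ leaves other critical points. Be aware, though, that the ``innermost-arc/outermost-disc'' alternative you offer at the end is not a genuine bypass of Otal: triviality of the two tangles hands you $k$ disjoint upper discs and $k$ disjoint lower discs, but producing one upper and one lower disc whose $\beta$--arcs are disjoint on the $2k$--punctured bridge sphere is exactly the nontrivial content of Otal's theorem (or of Hayashi--Shimokawa's generalization), and does not follow from a single innermost-disc reduction.
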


These corollaries both show the strength and the weakness of local
thin position. It is not as easy to compute as one might hope or
this would mean that recognizing the unknot and 2--bridge knots
would be easy, but in exchange it contains significant information
when it is computed.

\section{Compressibility of thin levels}
\label{sec:compressibility}

Ying-Qing Wu began an investigation of the thin levels for knots in
thin position.  He proved the following about the thinnest thin level,
that is, the thin level that meets the knot in the fewest number of
points.

\begin{thm}[Wu \cite{W}] 
If $K$ is in thin position with respect to $h$, then the thinnest
thin level of $K$ is incompressible.
\end{thm}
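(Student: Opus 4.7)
The plan is to argue by contradiction. Suppose $K$ is in thin position with respect to $h$ and the thinnest thin level $F = h^{-1}(r)$ is compressible in $S^3 \setminus K$. I aim to produce a pair of disjoint upper and lower disks at some regular value; by the isotopy argument in Section~4 (see \fullref{disks}--\fullref{isotopy'}), this would strictly reduce $w_h(K)$ and contradict thin position.

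First I set up the compressing data. Pick a compressing disk $D \subset S^3 \setminus K$ for the punctured sphere $F \setminus (K \cap F)$. Since $F$ is a $2$--sphere cutting $S^3$ into two balls, isotope $D$ rel $\partial D$ to lie entirely on one side, say above $F$, and put $h|_D$ in Morse general position. The essential curve $\partial D$ separates $F$ into disks $F_1, F_2$, each containing at least two points of $K \cap F$. Crucially, since $r$ is a thin level, the first critical value of $h|_K$ above $r$ is a local minimum of $h|_K$ (the level-width function increases as we ascend from $r$) and the first below is a local maximum. This bridge-like structure just above and just below $F$ will supply natural candidates for the upper and lower disks.

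Next comes the main construction. Using innermost-disk and outermost-arc moves on $D \cap h^{-1}(s)$ as $s$ varies just above $r$, simplify $D$ so that (a subdisk of) $D$ can be pushed down into a thin collar $F \times [r, r+\varepsilon]$ lying below the first minimum above $F$. Surger the small upper disk coming from that first minimum along this subdisk to produce an upper disk $D^+$ whose foot $\partial D^+ \cap F$ lies entirely in $F_1$. Symmetrically produce a lower disk $D^-$ with foot in $F_2$ from the maximum just below $F$. Since $F_1 \cap F_2 = \partial D \subset F \setminus K$ and the feet of $D^\pm$ lie on opposite sides of $\partial D$, a small perturbation makes $D^+ \cap D^- = \emptyset$, yielding the desired contradiction.

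The main obstacle is the simplification-and-surgery step: the compressing disk $D$ may be intricately tangled with critical levels of $h|_K$ above $F$, and we must preserve the essentiality of $\partial D$ in $F \setminus K$ while pushing a usable subdisk into the collar. The \emph{thinnest} hypothesis is what gives the argument its leverage: should the direct construction fail, the alternative route is to use $D$ to compress $F$ into a sphere $F'$ with $|F' \cap K| < |F \cap K|$ and to produce, via handle reordering and possibly a cancelling pair in the Morse function $h$, a new Morse function realizing $F'$ as a thin level. Such a Morse function would have a thinner thin level than $h$ --- and, by the bookkeeping of widths, strictly smaller $w(K)$ --- contradicting minimality of $|F \cap K|$ among thin levels and of $h$ among Morse functions. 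Either branch of the argument concludes that $F$ must in fact have been incompressible.
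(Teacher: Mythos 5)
The paper records Wu's strategy as follows: show that if a thin level is compressible, then the compressed surface is parallel (rel $K$) to \emph{another thin level of the same Morse function}, and conclude by induction that the thinnest one must be incompressible. Your proposal does not do this, and both of its branches have gaps. In the first branch you invoke ``the small upper disk coming from that first minimum'' above $F$, but there is no such disk: a local minimum of $h|_K$ just above a thin level is an arc that dips down toward $F$ and then rises away, not an arc that leaves $F$ and returns to it, so it does not supply an upper disk for $F$. (At a thin level the first critical point above is a minimum and the first below is a maximum precisely because the width is locally minimized there --- this is the \emph{opposite} of the picture that produces upper/lower disks at that level.) Moreover, as you yourself note, this branch does not use the ``thinnest'' hypothesis; if it worked it would show that \emph{every} thin level is incompressible, which is false --- the compressibility of non-minimal thin levels is exactly what Sections~7--8 (Heath--Kobayashi, Tomova) are about.

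The second branch is closer in spirit to Wu but the width bookkeeping is backwards. You claim a new Morse function realizing a thinner thin level $F'$ would have ``strictly smaller $w(K)$,'' but by \fullref{widthcount} (Scharlemann--Schultens) the width is $w(K)=2\sum a_{i_l}^2-2\sum b_{j_l}^2$: \emph{decreasing} a thin-level count $b_{j_l}$ while leaving the thick levels alone \emph{increases} width. So producing a Morse function with a thinner thin level is not by itself a contradiction with thin position. The contradiction Wu actually reaches is internal to $h$: the compressed sphere (which meets $K$ strictly fewer times) is shown to be isotopic rel $K$ to an already-existing thin level of $h$, contradicting the minimality of $|F\cap K|$ among thin levels of $h$. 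Establishing that parallelism is the technical heart of Wu's proof and is precisely where the ``thinnest'' hypothesis does its work; your proposal substitutes an unverified (and, as stated, incorrect) claim about width for it.
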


Wu's strategy is to show that if a thin level is compressible, then
the surface obtained by compressing it is parallel to another thin
level.  His result then follows by induction.  He also demonstrates
applications of this result: He uses it to give an alternative
proof of the Rieck--Sedgwick Theorem.

Maggy Tomova continued this investigation in \cite{To}.  She proved
more refined results about compressing disks for thin levels of links
in thin position.  Her results rely on a number of concepts,
observations and lemmas.  We give a very brief overview, for details
see \cite{To}.  In particular, note that the description below relies
on many technical lemmas.

Suppose the link $L$ in ${\mathbb S}^3$ is in thin position.  Further
suppose that $P = h^{-1}(r)$ is a thin level for $K$ and that $D$ is a
compressing disk for $P$ in the complement of $K$.  We may assume that
the interior of $D$ lies entirely above or entirely below $P$, say,
the former.  To help our visualization of the situation, we imagine
$D$ as a cylinder lying vertically over $\partial D$ and capped off
with the maximum, $\infty$, of $h$.  It is then clear that $D$
partitions the portion of $L$ lying above $P$ into two subsets.
Denote the portions of $L$ above $P$ that are separated by $D$ by
$\alpha$ and $\beta$.  See \fullref{ab}.

\begin{figure}[ht!]
\centerline{\includegraphics[width=.7\textwidth]{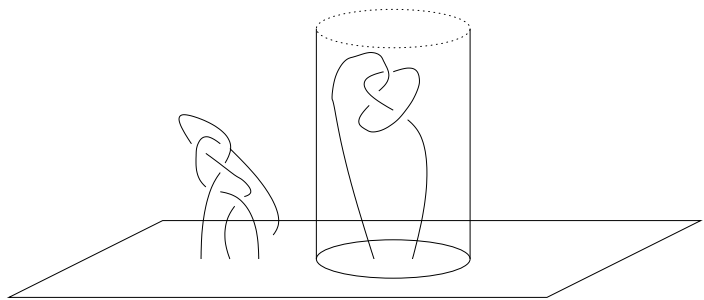}}
\caption{The portions $\alpha$ and $\beta$ of $L$}
\label{ab}
\end{figure}

Now play off $\alpha$ versus $\beta$.  An alternating thin level is a
thin level $P' = h^{-1}(r')$ above $P$ such that the first minimum
above $P'$ lies on $\alpha$ and the first maximum below $P'$ lies on
$\beta$ or vice versa.  As it turns out, alternating thin levels
necessarily exist; furthermore, for any adjacent alternating thin
levels, either the portion of $\alpha$ or the portion of $\beta$ lying
between the two alternating thin levels is a product.

Interestingly, if we number the alternating thin levels above $P$ by
$A_1,$ $\dots,$ $A_n$, such that $h(A_{j-1}) < h(A_j)$, then the
sequence $w_1, \dots, w_n$ defined by $w_j = \#|K \cap A_j|$ is
strictly decreasing.  The class of alternating thin levels can be
enlarged to include other thin levels that satisfy certain technical
properties enjoyed by alternating thin levels.  The resulting class of
surfaces are the potentially alternating surfaces.  Compressing disks
such as $D$ can then be assigned a height: Assign $D$ the height $k$
if $D \cap A_{k-1} \neq \emptyset$ but $D \cap A_k = \emptyset$.

The short ball for a compressing disk $D$ for $P$ is the ball bounded
by $D$ and a subdisk of $P$ that contains the shorter of $\alpha$ or
$\beta$, that is, that portion of the knot whose absolute maximum is
lower than that of the other.  (By transversality, these two maxima do
not lie on the same level.)  The compressing disk $D$ for $P$ is
reducible if there is a disk $E$ whose interior lies in the short ball
for $D$, whose boundary is partitioned into an arc $\tau$ on $D$ and
an arc $\omega$ on $P$ and for which $\omega$ is essential in $P -
(\partial D \cup L)$.  A compressing disk is irreducible if it is not
reducible.

A key result is the following:

\begin{thm}[Tomova \cite{To}] 
Suppose $D$ and $D'$ are two irreducible compressing disks
for $P$, and $\alpha, \alpha'$ are the strands of $L$ lying in the
corresponding short balls.  Then $\height(D) = \height(D')$ implies
$\alpha = \alpha'$.  Otherwise, $\alpha \cap \alpha' = \emptyset$.
\end{thm}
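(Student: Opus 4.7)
The plan is to exploit the product structure between consecutive alternating (or potentially alternating) thin levels $A_1, \dots, A_n$ above $P$, together with the strict monotonicity $w_1 > w_2 > \cdots > w_n$, to constrain where the strands $\alpha$ and $\alpha'$ can live. Throughout I would arrange, after a small isotopy, that the interiors of $D$ and $D'$ lie above $P$, meet each $A_j$ transversely, and meet $L$ not at all, and I would always remove inessential intersections of $D \cap D'$ with each $A_j$ by standard innermost-disk arguments (using irreducibility to rule out bad configurations).

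For the equal-height case, suppose $\height(D) = \height(D') = k$. Then both disks meet $A_{k-1}$ but are disjoint from $A_k$, so each caps off in the region $R_k$ between $A_{k-1}$ and $A_k$ (together with everything above $A_k$). By the alternating property, exactly one of $\alpha, \beta$ (say the $\alpha$-strands) forms a product between $A_{k-1}$ and $A_k$; the other side carries the critical points. Because $D$ is disjoint from $A_k$, its short ball is forced to lie on the non-product side, i.e.\ it must contain the unique strand in $R_k$ that has its maximum below $h(A_k)$; the same applies to $D'$. Since $w_k < w_{k-1}$, there is a unique such strand, so $\alpha = \alpha'$. The irreducibility hypothesis is used here to exclude the possibility of rechoosing $D$ to a disk that swaps $\alpha$ with a shorter strand on the opposite side.

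For the unequal-height case, assume $\height(D) < \height(D')$, say $\height(D) = k$ and $\height(D') = k'$ with $k < k'$. Then $D$ is disjoint from $A_k$, so the short ball for $D$ lies entirely below $A_k$, and in particular $\alpha$ is trapped below $A_k$. On the other hand, $D'$ meets $A_{k'-1} \supset A_k$, and its short strand $\alpha'$ must have its maximum in the region at or above $A_k$ (because otherwise a subdisk of $D'$ would be a compressing disk of height at most $k$ with the same short ball, contradicting irreducibility after an innermost-disk reduction against $A_k$). Thus $\alpha$ sits strictly below the first maximum of $\alpha'$, and the product structure of the intermediate region forces $\alpha \cap \alpha' = \emptyset$.

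The main obstacle is the second case: carefully upgrading the crude statement ``$\alpha$ is below $A_k$ and $\alpha'$ reaches above $A_k$'' into genuine disjointness of the two strands. This requires controlling how $D'$ can wrap around strands of $L$ lying below $A_k$, and ruling out the situation where $\alpha'$ dips back down into the product region containing $\alpha$. The key technical ingredients I would invoke are Tomova's lemmas showing that an irreducible compressing disk can be isotoped to meet each $A_j$ (with $j \le \height(D)-1$) in arcs that respect the $\alpha/\beta$ partition, together with the reducibility argument: any strand that appears inside the short ball for $D'$ but below $A_k$ could be used to build a reducing disk $E$ for $D'$, contradicting irreducibility.
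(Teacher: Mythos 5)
The paper you are working from is a survey; it states Tomova's theorem citing \cite{To} but supplies no proof of its own, only the surrounding vocabulary (alternating thin levels, potentially alternating surfaces, heights, short balls, reducibility). So there is no ``paper's proof'' to compare against, and your proposal can only be judged on its internal soundness.

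As a reconstruction it assembles the right ingredients, but two steps do not hold up. First, in the equal-height case you infer from $w_{k}<w_{k-1}$ that there is a \emph{unique} strand in the region between $A_{k-1}$ and $A_k$ whose maximum lies below $h(A_k)$, and hence $\alpha=\alpha'$. Strict monotonicity of the $w_j$ only says the count drops; it does not say it drops by exactly two, nor that the dropped strand is the one in the short ball, nor that both disks see the same partition. This last point is the deeper problem: the split of $L$ into $\alpha$ and $\beta$, and the alternating thin levels built from it, are defined \emph{relative to a given disk}. For $D$ and $D'$ you a priori get two partitions $\alpha_D,\beta_D$ and $\alpha_{D'},\beta_{D'}$ and two families of alternating levels; the paper's sketch makes it clear that Tomova needs the enlarged class of \emph{potentially alternating} surfaces precisely to obtain a height scale common to different disks, and your argument never makes that passage. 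Second, in the unequal-height case you state yourself that upgrading ``$\alpha$ is below $A_k$ and $\alpha'$ reaches above $A_k$'' to genuine disjointness is the main obstacle and then invoke unspecified ``Tomova's lemmas'' to close it. That is exactly the content of the theorem, so as written this case is circular rather than proved. To repair the argument you would need to (i) work with the canonical potentially alternating hierarchy so that $\height$ is well-defined independently of the disk, (ii) prove, not assert, that irreducibility forces the short ball to contain exactly the strands dying between consecutive potentially alternating levels at the relevant height, and (iii) give an actual innermost-disk/outermost-arc argument showing that a strand of $\alpha'$ cannot re-enter the region occupied by $\alpha$ without producing a reducing disk.
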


\begin{cor}[Tomova \cite{To}] 
Any two distinct irreducible compressing disks for $P$ of the
same height must intersect.
\end{cor}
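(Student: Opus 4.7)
The plan is to derive this as a short contrapositive of the preceding theorem. Suppose, toward contradiction, that $D$ and $D'$ are distinct irreducible compressing disks for $P$ with $\height(D) = \height(D')$ and $D \cap D' = \emptyset$. Let $\alpha, \alpha'$ be the strands of $L$ lying in the short balls for $D, D'$ respectively. By the theorem, the equal-height hypothesis forces $\alpha = \alpha'$, so the two short balls contain exactly the same strand of $L$.

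Next, since $D$ and $D'$ both lie on the same side of $P$ (namely the side containing $\alpha$) and are disjoint, the boundary curves $\partial D$ and $\partial D'$ are disjoint essential simple closed curves in $P \setminus L$. The ``floor'' subdisks of $P$ that complete the short balls into 3-balls are therefore either disjoint or nested. In the disjoint case, the two short balls are disjoint regions of $\mathbb{S}^3$ above $P$, so they cannot both contain the strand $\alpha$, contradicting $\alpha = \alpha'$.

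In the nested case, assume without loss of generality that the short ball $B'$ of $D'$ is contained in the short ball $B$ of $D$. Then $D \cup A \cup D'$ (where $A$ is the annulus in $P$ between $\partial D$ and $\partial D'$) bounds a sub-region $R$ of $B$ that is disjoint from $L$, since $\alpha$ lies in the inner short ball $B'$. I would argue that $R$ is a product $S^1 \times I \times I$: any compressing disk for the annular boundary $A$ or any intersection of $L$ with $R$ would, using the irreducibility of $D$ and $D'$ together with the fact that $\partial D, \partial D'$ are essential in $P \setminus L$, produce either a reducing disk for $D$ or $D'$, or violate $\alpha = \alpha'$. Once $R$ is shown to be a product, $D$ and $D'$ are ambient isotopic in $\mathbb{S}^3 \setminus L$ rel $P$, so they represent the same compressing disk, contradicting distinctness.

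The main obstacle is the nested subcase: ruling out any ``hidden'' complexity of the region $R$ between $D$ and $D'$. This is where one must invoke the full strength of irreducibility for both disks, together with the technical setup of potentially alternating surfaces and heights that underlies the preceding theorem, to certify that $R$ really is a trivial product rather than harboring additional strands of $L$ or essential arcs that would witness reducibility.
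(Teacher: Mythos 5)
The paper itself gives no proof of this corollary (it simply cites Tomova and a figure), so I can only judge your argument on its own terms. Your overall plan---derive the corollary from the preceding theorem by supposing $D\cap D'=\emptyset$, concluding $\alpha=\alpha'$, and then analyzing the relative position of the short balls---is sound, but the execution has a genuine gap and one unnecessary complication.

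\textbf{The gap.} Your dichotomy ``the floor subdisks are either disjoint or nested'' is incomplete. Two disjoint essential curves $\partial D,\partial D'$ divide the sphere $P$ into two disks $E, E'$ and an annulus $A$; the floor disk of $D$ is either $E$ or $A\cup E'$, and the floor disk of $D'$ is either $E'$ or $A\cup E$. Besides ``disjoint'' ($E$ and $E'$) and ``nested'' ($E\subset A\cup E$, etc.), there is the case where the two floor disks are $A\cup E'$ and $A\cup E$, overlapping in $A$ with neither containing the other. You do not address this. It can in fact be ruled out: in that configuration $\alpha=\alpha'$ forces $\beta=\beta'$, but $\beta$ lies over $E$ and $\beta'$ lies over $E'$, so $\beta=\emptyset$; then $E$ carries no puncture of $L$ and $\partial D$ bounds a disk in $P\setminus L$, contradicting essentiality of $\partial D$. (A similar essentiality argument is also what makes your ``disjoint'' case a genuine contradiction rather than merely forcing $\alpha=\alpha'=\emptyset$.) But as written, your proof simply asserts a dichotomy that isn't one, and the reader is left to discover and close the third case.

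\textbf{The unnecessary worry.} In the nested case you correctly observe that $R$ is bounded by $D\cup A\cup D'$ and is disjoint from $L$, but then you treat ``$R$ is a product'' as the hard part, invoking irreducibility of $D$ and $D'$ to rule out ``hidden complexity.'' This is not needed: $D\cup A\cup D'$ is an embedded $2$--sphere in $\mathbb{S}^3$, so by Alexander's theorem $R$ is a $3$--ball (parametrized as $D^2\times I$, not $S^1\times I\times I$ as you write), and since you have already shown $R\cap L=\emptyset$ and one checks $A\cap L=\emptyset$ (the punctures of $\alpha$ lie in the inner floor disk, those of $\beta$ outside the outer one), $D$ is isotopic to $D'$ through $R$ keeping the boundary in $P\setminus L$. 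Irreducibility plays no role here. So the place you flag as the ``main obstacle'' is actually trivial, while the real hole is the missing third configuration of floor disks.
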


See \fullref{DD'}.

\begin{figure}[ht!]
\centerline{\includegraphics[width=.7\textwidth]{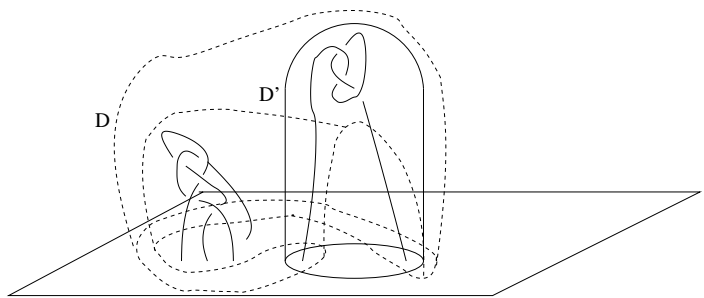}}
\caption{Two disks of the same height}
\label{DD'}
\end{figure}

\begin{thm}[Tomova \cite{To}] 
There exists a collection of disjoint irreducible compressing
disks for $P$ that contains one representative from each possible
height.
\end{thm}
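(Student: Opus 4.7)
The plan is to build the desired collection by induction on height, using the previous theorem as the main geometric input and a standard innermost-disk/outermost-arc argument to remove intersections. Let $n$ be the largest height at which an irreducible compressing disk exists, and for each $k \in \{1, \dots, n\}$ that admits one, fix an irreducible compressing disk $D_k$ of height $k$. Place the $D_k$'s in general position, so that each pairwise intersection $D_j \cap D_k$ is a compact $1$-manifold of properly embedded arcs and simple closed curves in each disk. Among all such choices (with $D_k$ irreducible of height $k$), pick one that minimizes the total number of components of $\bigcup_{j<k} D_j \cap D_k$; we aim to show this minimum is zero.

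Suppose for contradiction that some $D_j \cap D_k$ is nonempty. First handle simple closed curves by the usual innermost argument: an innermost curve $c \subset D_j$ bounds a subdisk $\Delta \subset D_j$ whose interior misses $D_k$, and surgering $D_k$ along $\Delta$ produces two disks, at least one of which is still a compressing disk for $P$ in the complement of $L$. Since the surgery leaves $\partial D_k$ fixed and only changes the interior inside a small neighborhood of $\Delta$, the resulting disk still meets $A_{k-1}$ and misses $A_k$, so its height remains $k$; and any reducing disk $E$ for the new disk could be pushed back across $\Delta$ to produce a reducing disk for $D_k$, contradicting irreducibility. Thus we may remove all closed curves of intersection without increasing total intersection.

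Now assume $D_j \cap D_k$ consists only of arcs, and let $\alpha \subset D_j$ be an outermost such arc, cutting off a subdisk $\Delta \subset D_j$ with $\partial \Delta = \alpha \cup \beta$ and $\beta \subset \partial D_j \subset P$. The crucial input is the previously proven theorem: since $j \neq k$, the short-ball strands $\alpha_j$ and $\alpha_k$ are disjoint, so $\Delta$ meets $L$ only in arcs on $\alpha_j$, which are disjoint from the strand $\alpha_k$ in the short ball of $D_k$. This makes it possible to perform a surgery-and-isotopy on $D_k$ guided by $\Delta$ that replaces the outermost arc $\alpha$ by a parallel arc on $\partial D_j$, strictly reducing $|D_j \cap D_k|$. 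The same verification as in the closed-curve case shows that the new disk is a compressing disk, has height $k$, and remains irreducible.

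The main obstacle is the arc case, and specifically verifying that irreducibility is preserved under outermost-arc surgery. A putative reducing disk $E$ for the surgered $D_k$ consists of an arc $\tau$ on the surgered disk and an essential arc $\omega$ on $P$; because the short ball may have been altered by the surgery, one cannot directly identify $E$ with a reducing disk for the original $D_k$. The argument must carefully follow $\tau$ across the trace of $\Delta$ and use the disjointness $\alpha_j \cap \alpha_k = \emptyset$ to show that $\omega$ remains essential in $P - (\partial D_k \cup L)$ after undoing the surgery, thereby producing a reducing disk for either $D_j$ or $D_k$ and contradicting the irreducibility of the chosen representatives. Once this is established, the minimality assumption is contradicted, so the initial minimum is $0$ and the $D_k$'s are pairwise disjoint as desired.
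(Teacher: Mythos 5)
The survey gives no proof of this theorem: it is stated as Tomova's result with the explicit caveat that the surrounding material ``relies on many technical lemmas'' and a pointer to \cite{To} for details, so there is no in-paper argument to compare against. The following is an assessment of the proposal on its own terms.

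The intersection-minimization scaffold is natural, but it does not close, and the difficulties are deeper than the one you flag. First, height is not preserved under innermost-circle surgery: replacing the subdisk $\Delta' \subset D_k$ bounded by the innermost circle with the subdisk $\Delta \subset D_j$ does keep $\partial D_k$ fixed and keeps the result disjoint from $A_k$ (both $\Delta$ and $D_k \setminus \Delta'$ lie below $A_k$), but nothing prevents all of $D_k \cap A_{k-1}$ from lying inside $\Delta'$, so the surgered disk may miss $A_{k-1}$ and hence have strictly smaller height -- at which point you no longer have a height-$k$ representative and the minimality contradiction evaporates. Second, for an outermost arc the boundary of $D_k$ changes: the two disks produced have boundaries obtained by banding arcs of $\partial D_k$ along $\beta \subset \partial D_j$, and while at least one of these curves must stay essential in $P - L$, you do not address this, nor the fact that the short ball is redefined by the new boundary. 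Third, the irreducibility issue you flag for the arc case is already present in the circle case: reducibility is defined relative to the short ball, the short ball changes under either surgery, and a reducing disk for the surgered $D_k$ does not simply ``push back'' to a reducing disk for the original. Finally, your invocation of the preceding theorem is misdirected as written: $\Delta$ is a subdisk of $D_j$, whose interior is disjoint from $L$ by the definition of a compressing disk, so the phrase ``$\Delta$ meets $L$ only in arcs on $\alpha_j$'' is false and obscures where the disjointness $\alpha_j \cap \alpha_k = \emptyset$ actually needs to enter -- it should control the position of the strand $\alpha_k$ relative to the ball cut off by $\Delta$ in $P$, not the intersection of $\Delta$ itself with $L$. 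A working proof would presumably build the disjoint family constructively from the nesting/disjointness pattern of the short balls that the preceding theorem reveals, rather than by generic intersection reduction on an arbitrary initial choice.
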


In certain situations, these results suffice to guarantee unique
compressing disks for thin levels!

\section{2--fold branched covers}

Let $K$ be a knot in ${\mathbb S}^3$ and let $M$ be the 2--fold branched
cover of ${\mathbb S}^3$ over $K$.  It is natural to ask the following
question: How is thin position of $K$ related to thin position of $M$?
This question is investigated by Howards and Schultens \cite{HS}.
(A related question about the behavior of the Heegaard genus under double
covers was investigated by Rieck and Rubinstein \cite{RR}).

A height function on ${\mathbb S}^3$ lifts to a Morse function on $M$.
The thick and thin levels of $K$ and $M$ are related as follows:
Denote the thick levels of $K$ by $S_1, \dots, S_n$ and the thin
levels by $L_1, \dots, L_{n-1}$.  Here the $S_i$'s and $L_i$'s are
spheres that meet the knot some (even) number of times.  In fact,
each $S_i$ meets $K$ at least $4$ times and each $L_i$ meets $K$ at
least $2$ times.  Denote the surface in $M$ corresponding to $S_i$ by
$\tilde S_i$ and the surface in $M$ corresponding to $L_i$ by $\tilde
L_i$.  If $S_i$ meets $K$ exactly $2l$ times, then $\tilde S_i$ is a
closed orientable surface of genus $l-1$.  And if $L_i$ meets $K$
exactly $2l$ times, then $\tilde L_i$ is a closed orientable surface
of genus $l-1$.

Compressing disks for $\tilde S_i$ may be constructed by taking a disk
$D$ in ${\mathbb S}^3$ that is disjoint from $L_1, \dots, L_{n-1}$,
whose interior is disjoint from $S_1, \dots, S_n$ and whose boundary
is partitioned into an arc $a$ in $S_i$ and an arc $b$ in $K$ that has
exactly one critical point.  (Such a disk is called a strict
upper/lower disk.)  The \dbc \hspace{.75 mm} $\tilde D$ of $D$ has its
boundary on $\tilde S_i$ and is a compressing disk for $\tilde S_i$.
This illustrates the fact that $\tilde L_{i-1}$ and $\tilde S_i$ and
also $\tilde L_i$ and $\tilde S_i$ cobound compression bodies.

Now if $K$ is in thin position, then one may ask whether or not the
\mdc \hspace{.75 mm} that $M$ inherits is in thin position.

\begin{thm}[Howards--Schultens]
If $K$ is a 2--bridge knot or a 3--bridge knot, then the \mdc
\hspace{.75 mm} that $M$ inherits is in thin position.
\end{thm}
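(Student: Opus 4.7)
The plan is to verify directly that the decomposition inherited by $M$ is a strongly irreducible generalized Heegaard splitting and then confirm that it achieves the minimum width in the dictionary order.

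For the 2--bridge case, recall that every 2--bridge knot is small (hence mp--small), so \fullref{abby} together with its corollary forces thin position of $K$ to coincide with bridge position. Thus $K$ has a single thick sphere $S_1$ meeting $K$ in $4$ points and no thin levels. The lift $\tilde S_1$ is a torus, and the lifts of the two strict upper/lower disks that exist for $S_1$ in $\mathbb{S}^3$ show that $\tilde S_1$ bounds handlebodies on both sides, giving a genus--$1$ Heegaard splitting of $M$. Since $M$ is a lens space (possibly $\mathbb{S}^3$), its Heegaard genus is at most $1$, so the width $(1)$ realized by $\tilde S_1$ is the minimum, and the inherited decomposition is in thin position.

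For the 3--bridge case, we may have genuine thin levels. If $K$ is mp--small, the argument is identical with $\tilde S_1$ of genus $2$ and width $(3)$. Otherwise, let $K$ be in thin position with thick levels $S_1,\dots,S_n$ meeting $K$ in $2a_i$ points and thin levels $L_1,\dots,L_{n-1}$ meeting $K$ in $2b_j$ points. A bridge--number argument shows $a_i\le 3$ and $b_j\le 2$, so each $\tilde S_i$ has genus at most $2$ and each $\tilde L_j$ has genus at most $1$. Using the strict upper/lower disks guaranteed by thickness of $S_i$ and lifting them, one obtains compressing disks exhibiting $\tilde S_i$ as the $\partial_+$--surface of compression bodies with $\tilde L_{i-1}$ and $\tilde L_i$ on either side. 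The absence of disjoint upper and lower disks at $S_i$ (a key feature of thin position for knots) lifts to strong irreducibility of each $\tilde S_i$ as a Heegaard surface of the relevant submanifold, and the thin--level arguments of Wu and Tomova lift to incompressibility of each $\tilde L_j$ in $M$. Thus the inherited decomposition is a strongly irreducible generalized Heegaard splitting.

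It remains to show that no decomposition of $M$ has strictly smaller width in the dictionary order. Because $K$ is 3--bridge, $M$ has Heegaard genus at most $2$, so the first entry of the width of any thin position of $M$ is bounded below by the genus of the essential surfaces in $M$. A short case analysis on the pattern $(a_1,b_1,a_2,\dots,a_n)$ compares the inherited width against any competing decomposition, using that reducing a thick level $\tilde S_i$ to a lower genus would lift down to a reduction of $S_i$ in $\mathbb{S}^3$, contradicting thinness of $K$. The main obstacle is precisely this final width comparison in the non-mp-small 3--bridge case: one must show that exchanging the inherited compression body structure for a different generalized Heegaard splitting of $M$ cannot produce a width that beats the inherited one, which requires exploiting the branched cover structure to rule out ``untelescopings'' of $M$ that are invisible from $\mathbb{S}^3$.
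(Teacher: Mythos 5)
This survey does not itself prove the theorem; it cites \cite{HS}. Assessed on its own terms, your proposal has a genuine gap at precisely the step you flag at the end, and the fix you sketch does not work. Verifying that the inherited decomposition is a strongly irreducible generalized Heegaard splitting shows only that it is an untelescoping; it does not show that its width minimizes over \emph{all} Morse functions on $M$. Your proposed comparison --- that a smaller-width decomposition of $M$ ``would lift down to a reduction of $S_i$ in $\mathbb{S}^3$'' --- is not available: a competing Morse function on $M$ need not be equivariant with respect to the covering involution, hence has no projection to $\mathbb{S}^3$, and cannot be played off against thin position of $K$. Closing the gap needs topological input about $M$ itself. In the 2--bridge case you correctly supply it (lens spaces have Heegaard genus $1$), but in the 3--bridge case you cannot ``argue identically'': one needs that $M$ is not $\mathbb{S}^3$ or a lens space (Hodgson--Rubinstein), and in the prime case that $M$ admits no generalized splitting of width $(1,1)$, neither of which comes from your route. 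The assertion that ``the thin-level arguments of Wu and Tomova lift to incompressibility of each $\tilde L_j$'' also needs care, since Wu controls only the thinnest thin level; it happens to suffice here only because, as noted below, there is at most one thin level.

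Two secondary issues with your case analysis. First, the bound ``$a_i\le 3$ and $b_j\le 2$'' is not the operative constraint; combining \fullref{widthcount} with $w(K)\le 2\,b(K)^2=18$ and the requirement that each thick level strictly exceed its adjacent thin levels shows that the only possible thin patterns for a 3--bridge knot are $(a_1)=(3)$ and $(a_1,b_1,a_2)=(2,1,2)$. Second, the relevant dichotomy is prime versus composite, not mp--small versus not: in pattern $(2,1,2)$ the unique thin level is, by Wu, an incompressible twice-punctured sphere, which is either trivial (contradicting thinness) or a decomposing sphere, so this pattern occurs exactly for composite 3--bridge knots (connected sums of two 2--bridge knots), with $M$ a connected sum of two lens spaces and inherited width $(1,1)$. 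Prime 3--bridge knots, mp--small or not, are in bridge position when thin; for those one must still argue separately that $M$ has width exactly $(3)$.
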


This result is not true for knots in general.  Consider for instance
torus knots.  For torus knots the \mdc \hspace{.75 mm} that their \dbc
\hspace{.75 mm} inherits is not necessarily in thin position.  To see
this, consider the following: The complement of a torus knot is a
Seifert fibered space fibered over the disk with two exceptional
fibers.  This places restrictions on the type of incompressible
surfaces that can exist.  In particular, it rules out meridional
surfaces.  For a discussion of incompressible surfaces in Seifert
fibered spaces, see for instance Hempel \cite{H} or Jaco \cite{J}.  It follows
that $K$ is mp-small.

Now Thompson's Theorem (\fullref{abby}) implies that thin position
for $K$ is bridge position.  Bridge numbers for torus knots can be
arbitrarily large.  Specifically, if $K$ is a $(p,q)$--torus knot, then
the bridge number of $K$ is $\min\{p,q\}$.  This was proved by Schubert
in \cite{S}.  For a more contemporary and self contained proof see
Schultens \cite{Sc5}.  Thus for $K$ in thin position, the \mdc \hspace{.75 mm}
that the \dbc \hspace{.75 mm} inherits is a Heegaard splitting of
genus $\frac{\min\{p,q\}}{2} - 1$.

On the other hand, the \dbc \hspace{.75 mm} of ${\mathbb S}^3$ over a torus knot is a
small Seifert fibered space.  Specifically, the \dbc \hspace{.75 mm} of ${\mathbb
S}^3$ over the $(p, q)$--torus knot is a Seifert fibered space fibered
over ${\mathbb S}^2$ with three exceptional fibers of orders $p, q,
2$.  (Such manifolds are also called Brieskorn manifolds.)  But any
such manifold possesses Heegaard splittings of genus $2$.

\section{Questions}

The following questions deserve to be considered:
\begin{enumerate}
\item  Develop a less unwieldy notion of thin position for knots.

\item Find an algorithm to detect the width of a knot.  In light of the
discussion at the end of \fullref{sec:hk} we ask: find an algorithm
to place a knot in local thin position for knots.

\item Characterize the compressibility of thin levels for knots in
thin position.

\item Construct knots of arbitrarily large width.

\item Apply the concept of thin position in completely different settings.
\end{enumerate}

\bibliographystyle{gtart}
\bibliography{link}

\end{document}